\documentclass[fleqn]{amsart}
\usepackage[utf8]{inputenc}
\usepackage[american]{babel}
\usepackage{zref-clever}
\zcsetup{cap=true}

\input{000_setup}

\title[]{Conic locus of inversive Poncelet circumcenter\\and two points of invariant circle power}

\author[R. Garcia]{Ronaldo A. Garcia$^a$}
\thanks{$^a$Instituto de Matemática e Estatística, Universidade Federal de Goiás, Goiânia, Brazil. \texttt{ragarcia@ufg.br}}

\author[S. Helman]{Shmuel (Mark) Helman$^b$}
\thanks{$^b$\texttt{helmanshmuel1@gmail.com}}

\author[D. Reznik]{Dan Reznik$^c$}
\thanks{$^c$Data Science Consulting, Rio de Janeiro, Brazil. \texttt{dreznik@gmail.com}}

\begin{document}

\begin{abstract}
We prove that over a generic Poncelet triangle family, the locus of the circumcenter of an inversive triangle is a conic. Additionally, we prove an earlier conjecture: over generic Poncelet triangles, two unique points exist which maintain constant power with respect to the circumcircle and Euler's circle of the family, respectively.
\end{abstract}

\maketitle

\vskip -.5cm
\noindent\textbf{Keywords:} Poncelet, circumcenter, inversion, conic locus, circle power, invariant.
\vskip .2cm
\noindent \textbf{MSC:} {51M04 \and 51N20 \and 51N35}

\section{Introduction}
\label{sec:intro}
We consider a Poncelet family of triangles $ABC$ \cite{dragovic11} inscribed in a first ellipse $\E$ and circumscribing a second, nested one called $\E_c$, \zcref{fig:poncelet}. Referring to \zcref{fig:main}, we add to this setup an \ti{inversion circle} $\mathcal{K}$ centered at $O$ and consider the family of \ti{inversive triangles} $A'B'C'$, whose vertices are inversions of $A,B,C$ with respect to $\mathcal{K}$. While it is known that the circumcenter $X_3$ (using Kimberling's notation \cite{etc}) of $ABC$ sweeps a conic \cite{helman2021-power-loci,helman2021-theory}, we prove that the circumcenter $X_3'$ of the inversive triangle -- distinct frm the inversive image of $X_3$ with respect to $\mathcal{K}$, see \zcref{fig:similitude} -- also sweeps one, thanks to its algebraic format, see \zcref{lem:complex-projective}. This is unexpected since the inversive family is not Ponceletian (it is inscribed in a degree-four curve). Indeed, experiments suggest that no other center of the inversive triangle sweeps a conic locus, see \zcref{fig:xk-nonconic}. We show that the type of conic swept by $X_3'$ depends on the location of $O$ with respect to the region swept by the circumcircle, see \zcref{fig:exterior,fig:interior}. We also show that $O$ is the internal similitude center -- where the two `internal' tangents meet \cite{mw} -- between the conic loci of $X_3$ and $X_3'$, see \zcref{fig:similitude}. 

Also proved here is a related phenomenon, originally conjectured in \cite{helman2021-power-loci}: in any generic Poncelet triangle family, two points $P_3$ (always interior to $\E'$) and $P_5$ exist which hold constant power with respect to the circumcircle and Euler's circle, respectively.

\begin{figure}
\centering
\includegraphics[width=.8\linewidth]{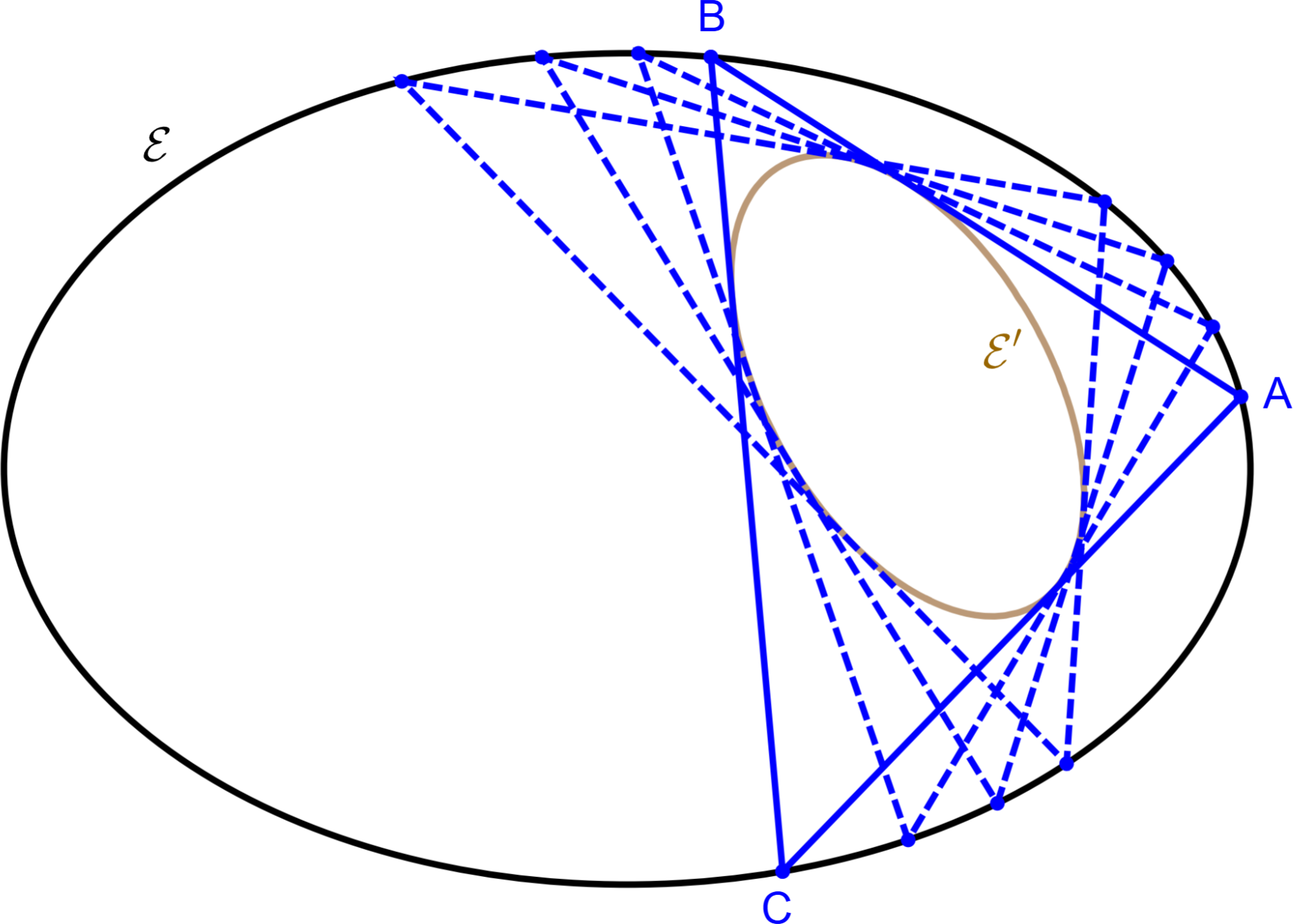}
\caption{If an n-gon (in this case a triangle $ABC$, solid blue) is inscribed in a first conic $\E$ while being simultaneously circumscribed about a second conic $\E'$, Poncelet's porism states that a one-dimensionaly family of such n-gons (dashed blue) exists which maintains said properties \cite{dragovic11}. \tb{Video}: \hrefs{https://youtu.be/UTdGwAIjuT8}}
\label{fig:poncelet}
\end{figure}

\begin{figure}
\centering
\includegraphics[width=\linewidth]{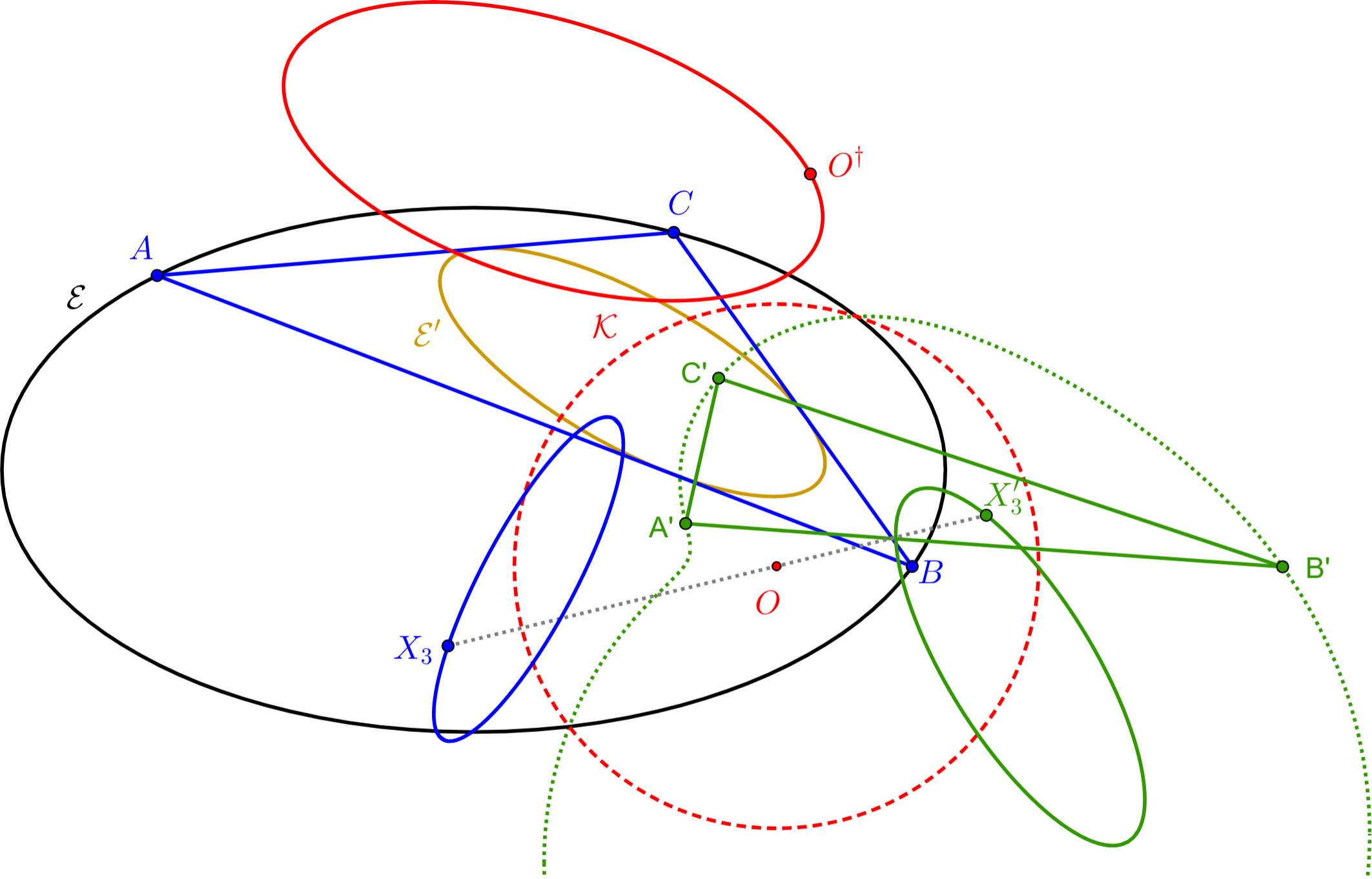}
\caption{$T=ABC$ (blue) is a Poncelet triangle interscribed between conics $\mathcal{E}$ and $\mathcal{E}'$. The locus of its circumcenter $X_3$ (blue) is guaranteed to be a conic \cite{helman2021-theory,helman2021-power-loci}. Let $O$ be a fixed point. The locus of its isogonal conjugate $O^\dagger$ is a conic (red) \cite{garcia2026-x4-conjugate}. The inversive triangle $T'=A'B'C'$ (green) has vertices at the inversions of $A,B,C$ with respect to a circle $\mathcal{K}$ (dashed red) centered on $O$. Over Poncelet its vertices sweep a quartic (dashed green, shown partially). Nevertheless, the locus of its circumcenter $X_3'$ sweeps a conic (solid green), proved in \zcref{thm:conic-inv-x3}. \tb{Video}: \hrefs{https://youtu.be/xoqDV_FbwC0}}
\label{fig:main}
\end{figure}

\begin{figure}
\centering
\includegraphics[width=\linewidth]{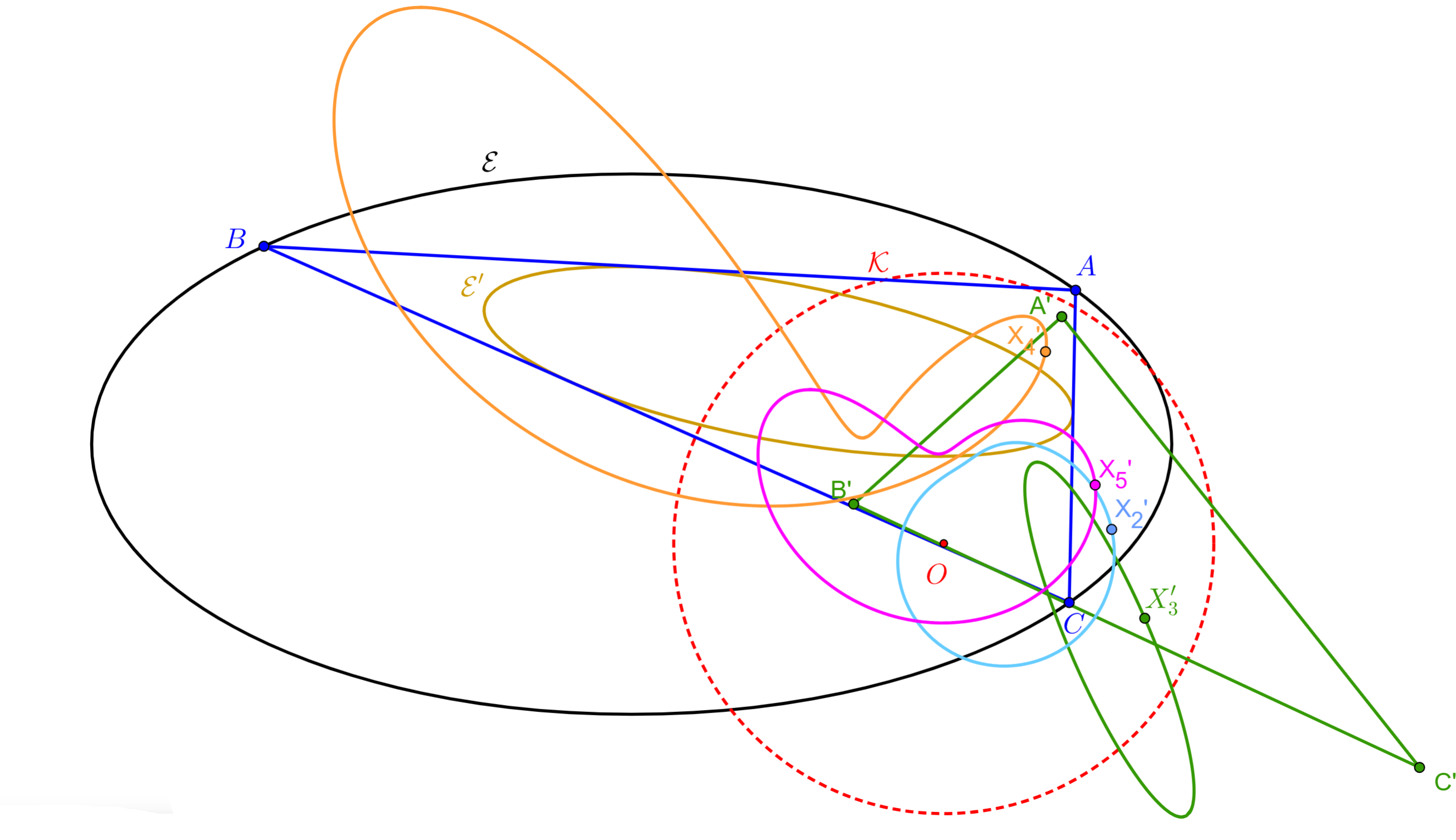}
\caption{$T=ABC$ (blue) is a Poncelet triangle interscribed in $\E,\E'$. As argued, the locus of the circumcenter $X_3'$ of the inversive triangle $T'=A'B'C'$ with respect to circle $\mathcal{K}$ (green) is a conic (solid green), the loci of the barycenter $X_2'$ (light blue), orthocenter $X_4'$ (orange), and Euler center $X_5'$ (magenta) are non-conic. Indeed, experiments suggest that no triangle center of $T'$ (other than $X_3'$) will in general sweep a conic. \tb{Video}: \hrefs{https://youtu.be/erA0irUaEjo}}
\label{fig:xk-nonconic}
\end{figure}

\subsection{Related work}

We have recently shown that over a generic Poncelet family, the locus of the isogonal conjugate of a fixed point $O$ is a conic (see the locus of $O^\dagger$ in \zcref{fig:main}), where the conic type is determined by the same conditions applicable to the locus of $X_3'$, namely the location of $O$ with respect to the region swept by the circumcircle \cite{garcia2026-x4-conjugate}.

Pioneering works on the dynamic geometry of Poncelet triangles include \cite{odehnal2011-poristic,zaslavsky2001-poncelet}, and those which followed them, e.g., \cite{dragovic2025-parable,jurkin2024-poncelet,kodrnja2023-locus,koncul2018-isot,mura2026-poncelet}. We too have explored various dynamic phenomena, e.g., \cite{garcia2024-incircle,garcia2020-new-properties,helman2021-theory,helman2021-power-loci}. 

Some of our proofs emerge from CAS-based manipulating and simplification of the long algebraic expressions stemming from the parameterization of Poncelet triangles described in \zcref{app:symmetric}. Nevertheless, the reader is encouraged to investigate applicable tools of algebraic geometry, e.g., those deployed in \cite{olga2014-incenters,schwartz2016-com,zaslavsky2001-poncelet}.

\subsection{Article organization}

In \zcref{sec:circum-inversive} we derive expressions for the circumcenter of the inversive triangle. In  \zcref{sec:locus-inversive} we prove our main result, i.e., that the locus of the circumcenter of the inversive triangle is indeed a conic. We also show that the center of inversion is the internal similitude center of the conic loci of $X_3$ and $X_3'$. Finally, in \zcref{sec:locus-power} we derive explicit expressions for the points of constant circumcircle and Euler's circle power.

In \zcref{app:symmetric} we specify a symmetric parametrization for Poncelet triangles based on Blaschke products. In \zcref{app:inversive} we derive basic properties of the inversive triangle and its circumcenter. Finally, in \zcref{app:kappas} we list the constants used in the expression of $P_5$.


\section{Circumcenter of inversive triangle}
\label{sec:circum-inversive}
Let $\A(z)=pz+q\ol{z}$ denote the linear transformation that takes the vertices $z_i$ of a triangle inscribed in the unit circle to those inscribed in the outer Poncelet ellipse $\E$. The inversion of a complex number $z$ with respect to a circle $\K(z_0,r)$ is given by $inv(z,\K)=  z_0+\frac{r^2(z-z_0)}{|z- z_0|^2}=z_0+\frac{r^2}{\ol{z}-\ol{z_0}}$. Define the \ti{inversive triangle} as having vertices $z_i'$ given by:
\[ z_i'=(inv\circ\A)(z_k), \;\;\; k=1,2,3.\]

\begin{lemma}    
Let $\{z_1,z_2,z_3\}$ be a Poncelet triangle inscribed in the unit circle and circumscribed in a ellipse with foci $f$ and $g$. The circumcenter of $\{z_1',z_2',z_3'\}$ is given by: \[ C=z_0+\frac{r^2 (a_2\lambda+a_1\ol{\lambda}+a_0)}{b_2\lambda+b_1\ol{\lambda}+b_0},\]
where:
\begin{align*}
 a_2=&-p q \left(q\ol{g}   \ol{f}-p \right) r^{2},\\
 a_1=&r^{2}( \lambda +p q \left(f g p -q \right) ),\\
 a_0=&-\left((f+g) p \,q^{2}-p^{2} q ( \ol{f} +\ol{g})  +(p^{2}   -q^{2})  {z_0} \right) r^{2},\\
 b_2=&\ol{b_1}=-q p \left(\ol{f} \ol{g}(p z_0- q \ol{z_0}) +(q^2-p^2)(\ol{f}   +\ol{g} )+p \ol{z_0}- q{z_0}  \right),\\
 b_0=& -p^{4}+q \left(f g +\ol{f} \ol{g}\right) p^{3}-
 \left(z_0 (f+g)+\ol{z_0} (\ol{f} +\ol{g})\right) p^{2} q + |z_0|^2\,p^{2}-\left( f g +\ol{f} \ol{g}\right) q^{3} p +q^4,\\
 +&\left(\ol{z_0} (f +g)+ z_0(\ol{f}+  \ol{g})\right) p \,q^{2} -  |z_0|^2 \,q^{2}.
\end{align*} 
\label{prop:circumcenter_inversive triangle}
\end{lemma}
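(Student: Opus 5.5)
The plan is to reduce the problem to the circumcircle of the Poncelet triangle $\A(z_k)$ itself, using the fact that inversion maps circles to circles. Write $w_k=\A(z_k)$ and $u_k=\overline{w_k}-\ol{z_0}$, so that $z_k'=z_0+r^2/u_k$. The points $u_k$ are the complex conjugates of $w_k-z_0$. Hence their circumcircle has center $c=\overline{X_3-z_0}$ and radius $R$, where $X_3$ and $R$ are the circumcenter and circumradius of $\{w_1,w_2,w_3\}$.

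Next I apply the map $u\mapsto 1/u$ to this circle. Its equation $u\ol{u}-\ol{c}u-c\ol{u}+|c|^2-R^2=0$ becomes, after substituting $u=1/v$ and multiplying by $v\ol{v}$,
\[ 1-\ol{c}\,\ol{v}-c\,v+(|c|^2-R^2)\,v\ol{v}=0. \]
This is a circle with center $\ol{c}/(|c|^2-R^2)$. Scaling by $r^2$ and translating by $z_0$ gives
\[ C=z_0+\frac{r^2\,(X_3-z_0)}{|X_3-z_0|^2-R^2}. \]
The denominator is the power of $z_0$ with respect to the circumcircle of $ABC$. This identity is the conceptual core of the lemma, and it will also give the similitude-center statement later.

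It remains to express $X_3$ and the power $|X_3-z_0|^2-R^2$ through the Blaschke parametrization of \zcref{app:symmetric}. There $\lambda=z_1z_2z_3$ is unimodular, and
\[ \sigma_1=f+g+\lambda\,\ol{f}\,\ol{g},\qquad \sigma_2=fg+\lambda(\ol{f}+\ol{g}), \]
both affine in $\lambda$. I write the circumcircle of the $w_k$ as $w\ol{w}-\ol{m}w-m\ol{w}+k=0$ and substitute $w=pz+q/z$, using $\ol{z}=1/z$ on the unit circle. Multiplying by $z^2$ gives a quartic in $z$ that vanishes at $z_1,z_2,z_3$. It therefore equals a constant multiple of $(z-z_1)(z-z_2)(z-z_3)(z-t)$ for some fourth root $t$. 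Matching coefficients gives a linear system in $m$, $\ol{m}$, $k$, $t$ whose coefficients are affine in $\sigma_1,\sigma_2,\lambda$ and their conjugates. Solving it by Cramer's rule gives $X_3=m$ and the power of $z_0$, namely $|z_0|^2-\ol{m}z_0-m\ol{z_0}+k$, as rational functions of $\lambda,\ol{\lambda}$.

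The final step is to substitute these into the formula for $C$ and clear the common Cramer denominator. I then need to reduce both numerator and denominator, using $\lambda\ol{\lambda}=1$, to the stated forms $a_2\lambda+a_1\ol{\lambda}+a_0$ and $b_2\lambda+b_1\ol{\lambda}+b_0$. I expect this reduction to be the main obstacle. Two things must happen: higher powers of $\lambda$ must cancel or collapse, and a common factor must be removed from numerator and denominator. I would first check that this common factor is the determinant of the linear system, which depends only on $p,q,f,g$. I would carry out the bookkeeping with a CAS, as the paper does elsewhere.

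As checks I would verify the symmetry $b_2=\ol{b_1}$ and that $b_0$ is real; both are forced because the denominator is a power, hence real. I would also check the degenerate case $q=0$, where $\E$ is a circle, against the formula.
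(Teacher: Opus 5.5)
Your proposal is correct, but it takes a different route from the paper. The paper works by brute force: it writes the inverted vertices, translated by $-z_0$, as $r^2z_k/(qz_k^2-\overline{z_0}z_k+p)$, feeds them into the complex circumcenter formula, and lets a CAS reduce the result through \zcref{SymPar}.

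You instead first reduce to $C=z_0+r^2(X_3-z_0)/\Pi(z_0)$, a relation the paper only derives later, in the proof of \zcref{lem:ratio-ox3}. You then need only $X_3$ and the power of $z_0$ for the uninverted triangle, and your coefficient matching gives both by hand. The obstacle you anticipate never appears. The constant term gives $t=\bar\lambda$ at once, and the $z^3$ and $z^1$ equations form a $2\times 2$ system in $m,\bar m$ with determinant $q^2-p^2$. This yields
\[
(p^2-q^2)X_3=pq\bigl[(p-q\bar f\bar g)\lambda+(pfg-q)\bar\lambda+p(\bar f+\bar g)-q(f+g)\bigr],
\]
\[
k=pq\bigl(fg+\bar f\bar g+(\bar f+\bar g)\lambda+(f+g)\bar\lambda\bigr)-p^2-q^2,
\]
both already affine in $\lambda,\bar\lambda$. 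So no higher powers arise and nothing needs cancelling; you simply multiply numerator and denominator by $p^2-q^2$. Then $(p^2-q^2)\Pi(z_0)$ is exactly $b_2\lambda+b_1\bar\lambda+b_0$, which is the paper's $\Pi_3=M_1\lambda+M_2\bar\lambda+M_3$ from \zcref{sec:locus-power}.

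Likewise $(p^2-q^2)(X_3-z_0)$ gives the numerator, and this exposes two misprints in the statement. The $a_j$ should not carry a factor $r^2$, since one already stands in front of the fraction. Also, $a_1$ should read $pq(fgp-q)$, without the stray $\lambda$.

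Your route gives a hand-checkable proof and explains $b_1=\bar b_2$ and $b_0\in\mathbb{R}$: the denominator is a power. Moreover, $X_3$ is a (generically) invertible real-affine function of $\lambda$, and $\Pi(z_0)$ is real-affine in $\lambda$. Hence $X_3\mapsto X_3'$ extends to a projective map fixing $O$ and every line through it. That gives \zcref{thm:conic-inv-x3} and the common-tangent part of \zcref{prop:similitude} together. The paper's route needs no circle geometry, but it buries this structure in CAS output. Your quartic factorization assumes $pq\neq 0$, which is why the separate $q=0$ check you already plan is needed.
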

\begin{proof}
Consider the family of triangles $\{z_1,z_2,z_3\} $ parametrized as described in \zcref{SymPar}. It follows that:
\begin{align*}
z_k'=&\frac{r^{2} z_k}{q \,z_k^{2}-\ol{z_0}z_k +p}, \quad k=1,2,3.
\end{align*}
The circumcenter $X_3$ of a triangle $\{w_1,w_2,w_3\}$ is given by \cite[Lemma 6.24, p.108]{chen2024-euclidean}:
\[X_3 = \frac{|w_1|^2(w_2 - w_3) + |w_2|^2(w_3 - w_1) + |w_3|^2(w_1 - w_2)}{\ol{w}_1(w_2 - w_3) + \ol{w}_2(w_3 - w_1) + \ol{w}_3(w_1 - w_2)}\]

Injecting the above into the symmetric parametrization of \zcref{SymPar} yields aand simplifying the expression above applied to the triangle $\{z_1',z_2',z_3'\}$ yields the claim.
\end{proof}

\section{Locus of the inversive circumcenter}
\label{sec:locus-inversive}
Let $\E$ and $\E_c$ be two nested ellipses which admit a family $\T$ of Poncelet triangles, i.e., they satisfy Cayley's condition for $n=3$ \cite{dragovic11}. Let $T=ABC$ be an element of $\T$ and $T'=A'B'C'$ have vertices which are inversions of $A,B,C$ with respect to a fixed circle $\K=(O,r)$, \zcref{fig:inversive}. In \zcref{app:inversive} we show that the inversive circumcenter $X_3'$ is collinear with $X_3$ and $O$. The following Lemma will be instrumental in proving our main result:

\begin{lemma}
Let $f:\Cp\rightarrow\Cp$ be a function given by:
\[f(z)=\frac{k_1 z+k_2\ol z+k_3 }{k_4 z+k_5\ol z+k_6},\]
with $k_j\in\Cp$, $1\leq{j}\leq{6}$. Suppose $k_4=\ol k_5$ and $k_6\in\R$. Identify $\mathbb{C}$ with the affine part of $\mathbb{RP}^2$ by sending $x+iy$ to $[x:y:1]$. Then $f$ is a projective transformation and thus preserves conics.
\label{lem:complex-projective}
\end{lemma}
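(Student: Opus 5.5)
The plan is to write $f$ in real coordinates and observe that, under the stated hypotheses, both its real and imaginary parts are ratios of real affine functions of $(x,y)$ sharing one common real denominator. That is exactly the form of a projective transformation of $\mathbb{RP}^2$ restricted to the affine chart.

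\emph{Step 1: the denominator is real and affine.} Set $z=x+iy$ and write $k_4=\alpha+i\beta$. Since $k_5=\ol{k_4}$ and $k_6\in\R$, the denominator is
\[ k_4 z+\ol{k_4}\,\ol z+k_6 = 2\,\mathrm{Re}(k_4 z)+k_6 = 2\alpha x-2\beta y+k_6 =: w(x,y). \]
This is a real affine function of $(x,y)$.

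\emph{Step 2: the numerator splits into two real affine functions.} The numerator $k_1 z+k_2\ol z+k_3$ is $\R$-affine in $(x,y)$ with complex coefficients. Writing $k_1+k_2=c_1$, $i(k_1-k_2)=c_2$ and $k_3=c_3$, it equals $c_1x+c_2y+c_3$. Its real and imaginary parts
\[ u=\mathrm{Re}(c_1)x+\mathrm{Re}(c_2)y+\mathrm{Re}(c_3),\qquad v=\mathrm{Im}(c_1)x+\mathrm{Im}(c_2)y+\mathrm{Im}(c_3) \]
are therefore real affine. Because $w$ is real, $\mathrm{Re} f=u/w$ and $\mathrm{Im} f=v/w$.

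\emph{Step 3: identify $f$ with a projective map.} Under $x+iy\mapsto[x:y:1]$, the map $f$ sends $[x:y:1]$ to $[u:v:w]=M\,[x:y:1]^T$, where $M$ is the real $3\times 3$ matrix whose rows are the coefficient vectors of $u$, $v$ and $w$. When $\det M\neq 0$, which holds for generic $k_j$ and is the case relevant for the application to \zcref{prop:circumcenter_inversive triangle}, $f$ is the restriction of a projective transformation of $\mathbb{RP}^2$ to the affine chart where $w\neq 0$. Points with $w=0$ are sent to the line at infinity.

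\emph{Step 4: conics are preserved.} If $Q$ is a conic with homogeneous equation $X^TSX=0$, then its image is $\{Y:\ Y^T M^{-T}SM^{-1}Y=0\}$. This is again a nondegenerate conic whenever $Q$ is, since the congruent matrix has the same rank. Restricting to the affine chart gives the affine statement. A point of the conic lying on the line $w=0$ is simply sent to infinity, which only affects the affine type of the image (ellipse, parabola or hyperbola), not the fact that it is a conic.

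\emph{Main obstacle.} The computation itself is routine. The delicate point is nondegeneracy: if $\det M=0$, the image collapses to a line or a point. I will handle this by stating the lemma for $\det M\neq 0$. In the application, nondegeneracy can then be checked on the explicit coefficients of \zcref{prop:circumcenter_inversive triangle}, or argued to hold for generic $O$.
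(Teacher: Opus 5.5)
Your proposal is correct and follows essentially the same route as the paper: split $f$ into real and imaginary parts over the common real affine denominator $k_4z+\overline{k_4}\,\overline{z}+k_6$, then read off a real $3\times 3$ matrix acting on $[x:y:1]$. Your version is more careful than the paper's in two respects: the paper treats any linear map of $\mathbb{R}^3$ as a projective transformation without checking that $\det M\neq 0$, and its coefficient bookkeeping is looser (e.g.\ its $w_2=(k_4-k_5)/i$ has the opposite sign of the true $y$-coefficient $i(k_4-k_5)$).
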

\begin{proof}
Write $k_j=u_j+i\, v_j$ with $u_k,v_k\in\R$ for each $j=1,2,3$. Since $k_4=\ol k_5$ and $k_6\in\R$, we can define $w_1:=k_4+k_5\in\R$, $w_2:=(k_4-k_5)/i\in\R$, and $w_3:=k_6$. Now write $z=x+i\,y$ with $x,y\in\R$. Identifying $\Cp$ with $\R^2$, we can rewrite $f$ as $f_{real}:\R^2\rightarrow \R^2$ as:
\[f_{real}(x,y)=\left( \frac{u_1 x+u_2 y+u_3 }{w_1 x+w_2 y+w_3}, \frac{v_1 x+v_2 y+v_3 }{w_1 x+w_2 y+w_3}\right)\]
Now, identify each point $(x,y)\in\R^2$ with $(x,y,1)$ in the projective plane $ \mathbb{RP}^2$, we can rewrite $f_{real}$ as $f_{proj}:  \mathbb{RP}^2\rightarrow \mathbb{RP}^2$ with:
\[f_{proj}(x,y,z)=\left(u_1 x+u_2 y+u_3 z,v_1 x+v_2 y+v_3 z,w_1 x+w_2 y+w_3 z\right)\]
This is a linear transformation on $\mathbb{RP}^2$, also known as a projective transformation. As is known, these transform conics to conics, as desired.
\end{proof}

Let $\L_3'$ denote the locus of the circumcenter of $T'$ over $\T$. 
Referring to \zcref{fig:main}, our main result follows directly from \zcref{prop:circumcenter_inversive triangle}
and \zcref{lem:complex-projective}, namely:

\begin{theorem}
\label{thm:conic-inv-x3}
$\L_3'$ is a conic.
\end{theorem}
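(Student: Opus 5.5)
The plan is to combine \zcref{prop:circumcenter_inversive triangle} with \zcref{lem:complex-projective}. By \zcref{prop:circumcenter_inversive triangle}, for each triangle of $\T$ the inversive circumcenter is $C=z_0+r^2 F(\lambda)$, where
\[
F(\lambda)=\frac{a_2\lambda+a_1\ol{\lambda}+a_0}{b_2\lambda+b_1\ol{\lambda}+b_0}.
\]
The coefficients $a_j,b_j$ depend only on the fixed data $p,q,f,g,z_0,r$, not on the position of the triangle. The only varying quantity is $\lambda$, which in the symmetric parametrization of \zcref{SymPar} is a symmetric function of the vertices $z_1,z_2,z_3$.

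\textbf{Step 1: the locus of $\lambda$ is a conic.} Over the Poncelet family, $\lambda$ should sweep a conic; I expect a circle, or a point on the unit circle scaled and translated. In the Blaschke product parametrization the vertices are the preimages $z_1,z_2,z_3$ of a point $\mu$ on the unit circle under a degree-three Blaschke map with zeros determined by $f,g$. The elementary symmetric functions of $z_1,z_2,z_3$ are then affine (Möbius with constant denominator) in $\mu$. Hence $\lambda$ is an affine image $\alpha\mu+\beta$ of a point moving on the unit circle. I will extract this explicitly from \zcref{app:symmetric}, which gives $|\lambda-\beta|=$ const.

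\textbf{Step 2: the hypotheses of \zcref{lem:complex-projective} hold.} We need $b_2=\ol{b_1}$, which is stated in \zcref{prop:circumcenter_inversive triangle}, and $b_0\in\R$. To check $b_0\in\R$, I will pair its terms with their conjugates. The terms $-p^4+q^4+|z_0|^2(p^2-q^2)$ are real, given that $p,q$ are real after rotating the frame, an assumption I will make explicit. The remaining groups are of the form $w+\ol{w}$, for instance $(fg+\ol f\ol g)(p^3q-q^3p)$ and $p^2q\,(z_0(f+g)+\ol{z_0}(\ol f+\ol g))$.

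One caveat concerns $a_1$: its displayed form contains $\lambda$ itself, which would spoil the linear-fractional structure. I will treat this as a typo, or else absorb the term, since $\lambda\ol\lambda$ is constant on a circle centered at the origin. Alternatively, I will re-derive $a_1$ from the CAS expression.

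\textbf{Step 3: conclude.} By \zcref{lem:complex-projective}, $F$ is a projective transformation of $\mathbb{RP}^2$, so it maps the conic traced by $\lambda$ to a conic. The map $w\mapsto z_0+r^2w$ is a similarity, so $\L_3'$ is a conic. Degeneracy can only occur if the projective map is singular, and the generic case excludes this.

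\textbf{Main obstacle.} I expect the hardest part to be verifying Step 1 and the reality of $b_0$ rigorously, including the exact definition of $\lambda$ in the parametrization. This amounts to careful symbolic bookkeeping rather than a conceptual difficulty.
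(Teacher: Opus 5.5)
Your proposal is correct and is essentially the paper's own argument: the theorem follows by feeding the circumcenter formula of Section~2 into the projective-map lemma, applied to the unit circle swept by $\lambda$. That formula's denominator has $b_2=\overline{b_1}$ and $b_0\in\mathbb{R}$, with $p=(a+b)/2$ and $q=(a-b)/2$ real by construction. Your Step~1 is immediate, since $\lambda=z_1z_2z_3=e^{i\theta}$ is the free parameter of the symmetric parametrization; you are also right that the $\lambda$ inside $a_1$ is a stray typo (the $\overline{\lambda}$-coefficient is just $r^2pq(fgp-q)$), though $\lambda\overline{\lambda}=1$ would absorb it anyway.
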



Referring to \zcref{fig:xk-nonconic}, experiments suggest:

\begin{conjecture}
In a generic Poncelet family, the only triangle center of $T'$ which sweeps a conic is the circumcenter $X_3'$.
\end{conjecture}

Let $\Rm$ be the region swept by the circumcircle of a generic Poncelet triangle family. It has been shown that $\Rm$ is delimited by two closed curves given implicitly by a polynomial of degree 4 \cite[Prop 2]{garcia2026-x4-conjugate}. Referring to \zcref{fig:exterior,fig:interior}:

\begin{figure}
\centering
\includegraphics[width=1\linewidth]{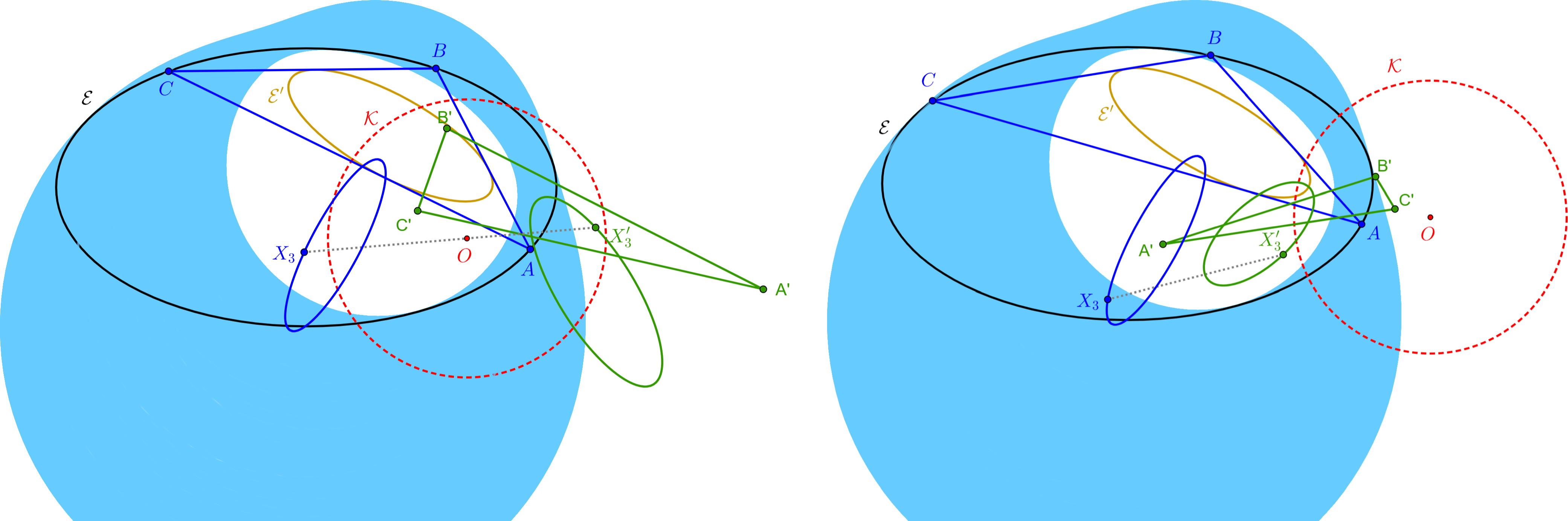}
\caption{Two cases (left and right) where the center of inversion $O$ is exterior to the region (light blue) swept by the circumcircle of Poncelet triangles $ABC$. In such cases, $\L_3'$ (green) is an ellipse.}
\label{fig:exterior}
\end{figure}

\begin{figure}
\centering
\includegraphics[width=.6\linewidth]{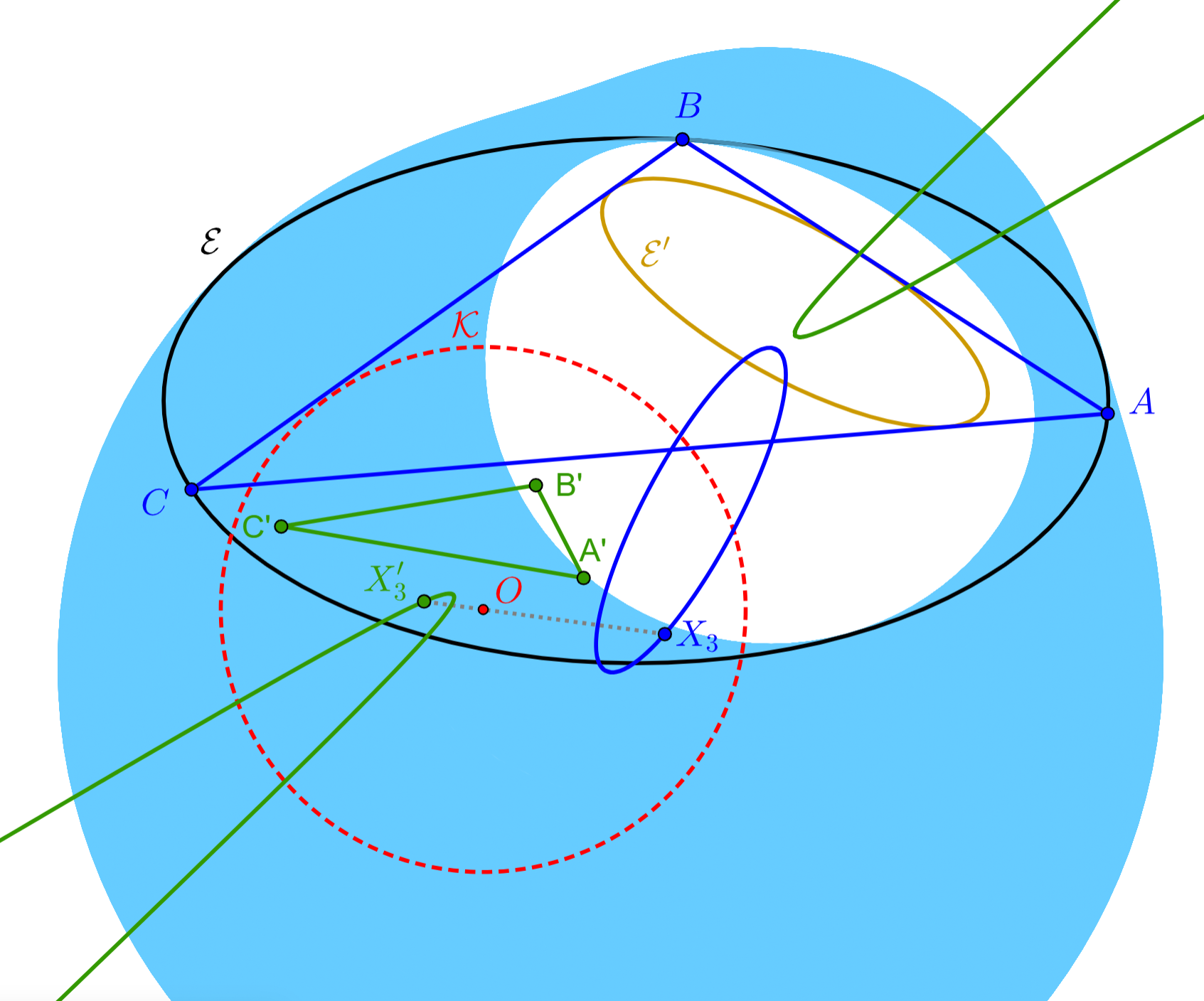}
\caption{When $O$ is interior to the region (light blue) swept by the circumcircle, $\L_3'$ (green) is a hyperbola.}
\label{fig:interior}
\end{figure}

\begin{proposition}
If $O$ is exterior (resp. interior) to $\Rm$, $\L_3'$ is an ellipse (resp. hyperbola). If $O$ is on $\partial\Rm$, $\L_3'$ is a parabola.
\label{prop:exterior}
\end{proposition}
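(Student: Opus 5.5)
By \zcref{prop:circumcenter_inversive triangle} and \zcref{lem:complex-projective}, $\L_3'$ is the image of a conic under a real projective map. Under a projective map, the conic type is decided by how the conic meets the line sent to infinity. So the plan is to identify that line and compare its position relative to the source conic with the position of $O$ relative to $\Rm$.

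Concretely, $C=z_0+r^2\,(a_2\lambda+a_1\ol\lambda+a_0)/(b_2\lambda+b_1\ol\lambda+b_0)$, where $\lambda$ runs over a fixed conic $\Gamma$. In the Blaschke parametrization of \zcref{app:symmetric}, $\lambda$ is (up to an affine change) the parameter whose locus is a circle or ellipse; I will first verify that $\Gamma$ is a bounded ellipse. The map $\lambda\mapsto C$ sends the real line
\[\ell:\; b_2\lambda+\ol{b_2}\,\ol\lambda+b_0=0\]
to the line at infinity. Hence $\L_3'$ is an ellipse, parabola or hyperbola according as $\ell\cap\Gamma$ is empty, a single tangency point, or two real points. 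Two real points give the two branches through infinity.

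The geometric meaning of $\ell$ is that the denominator vanishes exactly when the three points $z_k'$ are collinear. Inversion sends the circumcircle of $ABC$ to a line precisely when that circle passes through $O$. So the triangles with $\lambda\in\ell\cap\Gamma$ are exactly those whose circumcircle passes through $O$. This can also be confirmed from the formula: $X_3'$ lies on line $OX_3$ (\zcref{app:inversive}), and $X_3'$ runs to infinity exactly when the power of $O$ with respect to the circumcircle vanishes.

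The trichotomy then follows from the definition of $\Rm$:
\begin{itemize}
\item If $O\notin\Rm$, no circumcircle of the family passes through $O$, so $\ell\cap\Gamma=\emptyset$ and $\L_3'$ is an ellipse.
\item If $O$ is interior to $\Rm$, some circumcircle passes through $O$, and the power of $O$ changes sign along the family. This gives two transversal crossings, so $\L_3'$ is a hyperbola.
\item If $O\in\partial\Rm$, $O$ lies on the envelope of the circumcircles. The power function of $O$ then has a double zero, $\ell$ is tangent to $\Gamma$, and $\L_3'$ is a parabola.
\end{itemize}

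The main obstacle is making the count of intersections rigorous. Each triangle corresponds to three parameter values (rotations of the vertex labels), but the symmetric parametrization quotients this out, so that $\lambda\in\Gamma$ corresponds bijectively to triangles. I also need to show that in the interior case there are exactly two crossings, not four. I would first try expressing the power of $O$ with respect to the circumcircle as an affine function of $(\lambda,\ol\lambda)$ restricted to $\Gamma$; the paper's formula shows it is proportional to $b_2\lambda+\ol{b_2}\ol\lambda+b_0$. Its zero set on the ellipse $\Gamma$ has at most two points, and tangency corresponds to the boundary of $\Rm$ by the envelope characterization in \cite[Prop 2]{garcia2026-x4-conjugate}.
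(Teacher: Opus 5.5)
Your proof is correct and follows the paper's argument. In both, $X_3'$ escapes to the line at infinity exactly when the moving circumcircle passes through $O$, and the conic type is read off from whether this never happens, happens transversally, or happens with a double root when $O\in\partial\Rm$.

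Your version is more careful: you identify the denominator with a constant multiple of the power of $O$ (indeed $b_2=(p^2-q^2)M_1$ and $b_0=(p^2-q^2)M_3$), which makes the count a line meeting $|\lambda|=1$ in at most two points. The paper's ``six crossings'' and ``three double roots'' are your two crossings and one tangency in $\lambda$, multiplied by three. That factor comes from a vertex parameter, which traverses each triangle three times.
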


\begin{proof}
When $O$ is exterior (resp. interior) to $\Rm$, it is never crossed (resp. six-times crossed) by the moving circumcircle. When $O$ is on the circumcircle, the vertices of $T'$ become collinear (inversion will send a circle to a line), and its circumcenter is pushed to the line at infinity. Whe $O$ is on $\partial\Rm$, it is 3-times crossed (3 double roots), yielding a parabola.
\end{proof}

\begin{definition}
Given two circles, the internal (resp. external) similitude center is the intersection of their two common internal (resp. external) tangents. 
\end{definition}

Below we use this concept with respect to the two common internal tangents between two conics, since there is exists a projectivity that sends them to two circles.

Let $\L_3$ refer to the locus of the circumcenter of the generic Poncelet family $\T$. This is guaranteed to be a conic \cite[Thm.2]{helman2021-power-loci}. Referring to \zcref{fig:similitude}, we thank Dominique Laurain \cite{laurain2026-similitude} for observing that:

\begin{figure}
\centering  \includegraphics[width=\linewidth]{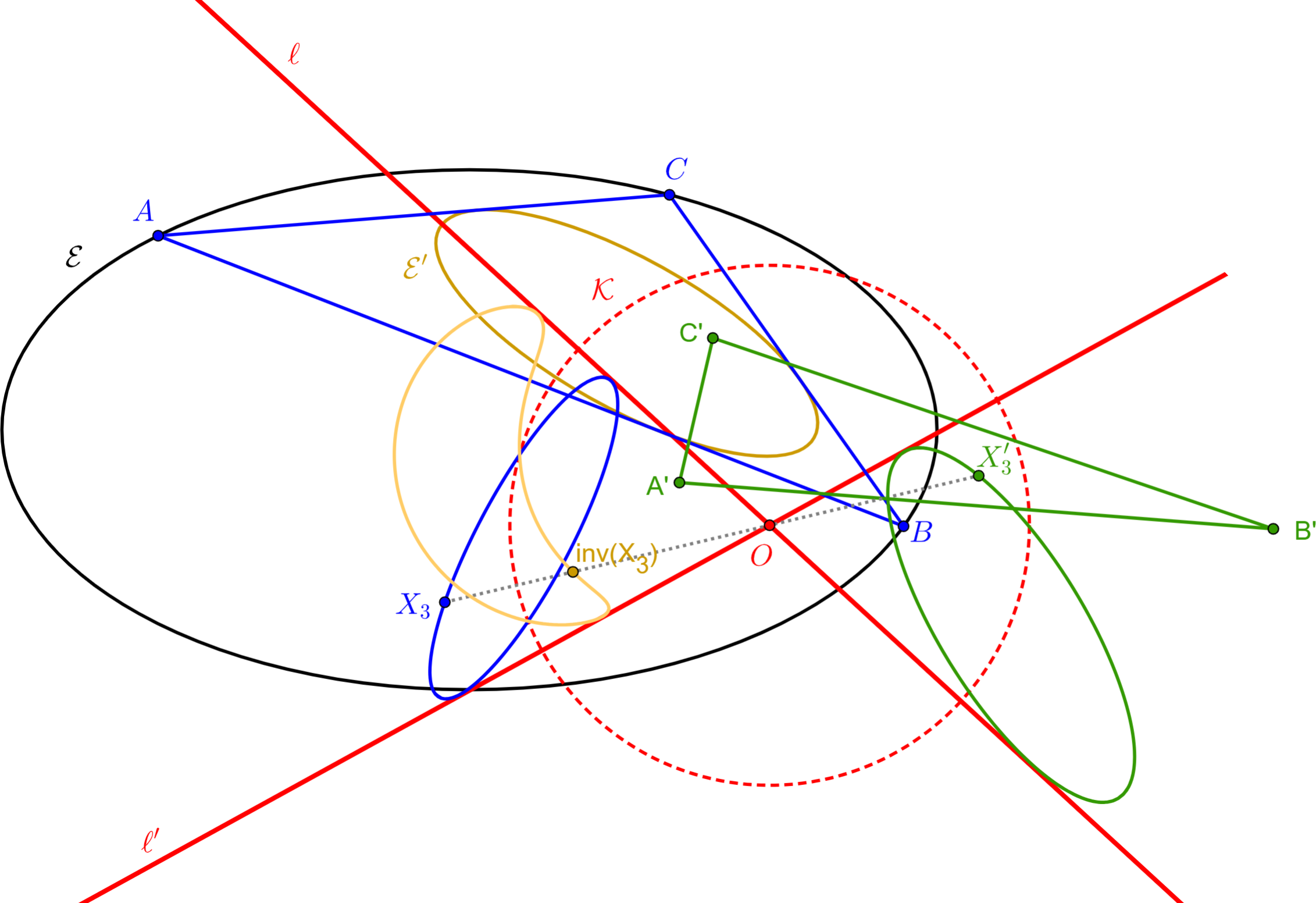}
\caption{Lines $\ell$ and $\ell'$ (both red) are `internally' tangent to $\L_3$ (blue) and $\L_3'$ (green). Surprisingly, these meet at $O$, i.e., the latter is the internal similitude center of said two conic loci, see \zcref{prop:similitude}. Also shown is the non-conic locus of the inversion of $X_3$ with respect to $\mathcal{K}$ (yellow), labeled $\text{inv}(X_3)$. Notice how $\ell,\ell'$ are also tangent to it.}
\label{fig:similitude}
\end{figure}

\begin{proposition}
$O$ is the internal similitude center of $\L_3$ and $\L_3'$.
\label{prop:similitude}
\end{proposition}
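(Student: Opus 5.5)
The plan is to reduce the statement to a pointwise relation between $X_3$ and $X_3'$ along rays from $O$, and then to read off the common tangents through $O$ from that relation. First I will make precise the collinearity proved in \zcref{app:inversive}. If the circumcircle of $T$ has center $X_3$ and radius $R$, then $O$ does not lie on it (generically), and its inversive image with respect to $\K=(O,r)$ is the circumcircle of $T'$. The standard formula for the image of a circle under inversion gives
\[ X_3' - O = \frac{r^2}{\pi(t)}\,(X_3-O), \qquad \pi(t)=|X_3-O|^2-R^2 , \]
where $\pi(t)$ is the power of $O$ with respect to the circumcircle of the Poncelet triangle at parameter $t$. So $X_3'$ is obtained from $X_3$ by a real, parameter-dependent homothety centered at $O$. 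I will cross-check this against the closed form of \zcref{prop:circumcenter_inversive triangle}: with $\lambda$ the relevant symmetric parameter, $C-z_0$ should be a real multiple of $X_3-z_0$.

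Next I will consider the lines through $O$ that are tangent to $\L_3$. Parametrize $\L_3$ as $X_3(t)$ and let $\theta(t)$ be the direction of the line $OX_3(t)$. The line $OX_3(t_0)$ is tangent to $\L_3$ exactly when $\theta'(t_0)=0$, i.e. when $\operatorname{Im}\bigl(\overline{(X_3-O)}\,X_3'\bigr)=0$ at $t_0$. Since $X_3'(t)-O=\mu(t)(X_3(t)-O)$ with $\mu=r^2/\pi$ real, the line $OX_3'(t)$ coincides with $OX_3(t)$ for every $t$. Differentiating gives
\[ \operatorname{Im}\bigl(\overline{(X_3'-O)}\,\tfrac{d}{dt}X_3'\bigr)=\mu^2\operatorname{Im}\bigl(\overline{(X_3-O)}\,\tfrac{d}{dt}X_3\bigr), \]
because the $\mu'$ term is a real multiple of $|X_3-O|^2$ and so has zero imaginary part. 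Hence both loci have stationary direction at the same parameters $t_0$. Therefore each tangent from $O$ to $\L_3$ is also tangent to $\L_3'$, at the point $X_3'(t_0)$. Both loci are conics by \cite{helman2021-power-loci} and \zcref{thm:conic-inv-x3}, so from $O$ there are at most two tangents to each; these two lines $\ell,\ell'$ are common tangents and they meet at $O$.

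The remaining step, which I expect to be the main obstacle, is to justify that these are the \emph{internal} common tangents. This means showing that at the tangency parameters the two conics lie on opposite sides of $O$ along $\ell$ and $\ell'$, which amounts to checking that $\mu(t_0)$ has the appropriate sign. I will also need to rule out degenerate cases: $X_3'$ escaping to infinity when $\pi(t_0)=0$, and $\mu(t_0)\,\tfrac{d}{dt}X_3=0$. To settle the sign I will first try to use the projectivity mentioned before the statement, which sends the two conics to circles, together with the sign of $\pi$, which by \zcref{prop:exterior} is governed by the position of $O$ relative to $\Rm$. If a clean synthetic argument does not emerge, the fallback is a CAS computation in the symmetric parametrization of \zcref{app:symmetric}: solve $\theta'(t)=0$ and evaluate the sign of $\pi$ at the resulting parameters.
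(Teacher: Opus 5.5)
Your first two steps are right, and they are the paper's argument made rigorous. The paper's entire proof is the collinearity of $X_3$, $O$, $X_3'$, plus the remark that $\sigma\colon X_3\mapsto X_3'$ is a diffeomorphism along lines through $O$. Hence $OX_3$ is tangent to $\mathcal{L}_3$ if and only if $OX_3'$ is tangent to $\mathcal{L}_3'$. Your identity $\operatorname{Im}\bigl(\overline{(X_3'-O)}\,\tfrac{d}{dt}X_3'\bigr)=\mu^2\operatorname{Im}\bigl(\overline{(X_3-O)}\,\tfrac{d}{dt}X_3\bigr)$ is exactly what justifies that remark. (In your tangency criterion, the $X_3'$ in $\operatorname{Im}\bigl(\overline{(X_3-O)}\,X_3'\bigr)$ must mean $\tfrac{d}{dt}X_3$; as written it collides with the inversive circumcenter.)

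The gap is the step you defer, and it cannot be closed as planned, because the sign of $\mu(t_0)=r^2/\pi(t_0)$ is not forced.

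\emph{External case.} Take $O$ far from the configuration. Then $O$ lies outside the bounded ellipse $\mathcal{L}_3$, so $\ell,\ell'$ are real. It also lies outside every circumcircle, so $\pi>0$ and $\mu>0$ for all $t$. Each point of $\mathcal{L}_3'$ then lies on the same ray from $O$ as the corresponding point of $\mathcal{L}_3$. Both conics lie in the same sector bounded by $\ell,\ell'$, so these are \emph{external} common tangents and $O$ is the external similitude center.

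\emph{Internal case.} If $O$ lies inside $\E'$, it is inside every triangle and hence every circumcircle. Then $\mu<0$ for all $t$, $\mathcal{L}_3'$ sits in the opposite sector, and the tangents, when real, are internal.

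Neither tool you propose separates these cases. Both positions are exterior to $\Rm$, so \zcref{prop:exterior} only tells you that the sign of $\pi$ is constant, not which sign it is. And internal versus external is an affine distinction, which a projectivity sending the conics to circles can interchange.

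The paper's proof never addresses the word ``internal'' either. What your argument (and the paper's) actually establishes is weaker: $O$ is the meeting point of two common tangents of $\mathcal{L}_3$ and $\mathcal{L}_3'$, i.e.\ a center of similitude. Its type is decided by the sign of $\pi$ at the two tangency parameters: internal if negative, external if positive.
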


\begin{proof}
In \zcref{app:inversive} we show that $X_3$, $O$, and $X_3'$ are collinear. As $O$ is fixed,  the map
$\sigma(X_3)=X_3'$ is a diffeomorphism between the two conics. Therefore, $OX_3$ is tangent to the locus of $X_3$ if and only if $OX_3'$ is tangent to the locus of $X_3'$.

\end{proof}

Let $\text{inv}X_3$ denote the inversion of $X_3$ with respect to $\mathcal{K}$. Still referring to \zcref{fig:similitude}, it can be shown that:

\begin{observation}
The internal tangents between $\L_3$ and $\L_3'$ are also tangent to the locus of $\text{inv}X_3$. 
\end{observation}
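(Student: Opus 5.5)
The plan is to relate $\text{inv}X_3$ to the two conic loci through the same line through $O$. By \zcref{app:inversive}, $X_3$, $O$ and $X_3'$ are collinear. By definition, $\text{inv}X_3=O+r^2(X_3-O)/|X_3-O|^2$ also lies on the line $OX_3$. So over the Poncelet family, the three points $X_3$, $X_3'$ and $\text{inv}X_3$ always lie on one moving line $\ell(t)$ through $O$, and $\text{inv}X_3$ determines the same direction as $X_3$.

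The first step is to make the radial correspondence explicit. Write $X_3'=O+\mu(t)(X_3-O)$ for a real factor $\mu(t)$. Computing $\mu$ from the formula of \zcref{prop:circumcenter_inversive triangle}, or from \zcref{app:inversive}, I expect $\mu=r^2/(R^2-|X_3-O|^2)$, where $R$ is the circumradius of $ABC$. The reason is that the circumcircle of $T'$ is the inverse of the circumcircle of $ABC$, and the center of the inverse of a circle $(X_3,R)$ is $O+r^2(X_3-O)/(|X_3-O|^2-R^2)$, up to sign. In contrast, $\text{inv}X_3=O+\frac{r^2}{|X_3-O|^2}(X_3-O)$.

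The second step is a tangency criterion. Parametrize a locus as $P(t)=O+\rho(t)u(\theta(t))$, where $u$ is a unit vector and $\rho$ a signed distance. The line $OP$ is tangent to the locus at $P(t)$ exactly when $\theta'(t)=0$ and $\rho(t)\neq0$, assuming the point is regular. All three curves share the same angular function $\theta(t)$, because they lie on the common line $\ell(t)$. Each differs from the others only by a nonvanishing real radial factor, and multiplying $\rho$ by a nonzero smooth function leaves $\theta'$ unchanged. Hence at the parameters $t_*$ where $\theta'(t_*)=0$, the line $\ell(t_*)$ is tangent to all three curves at once. These are the parameters that produce the internal tangents $\ell,\ell'$ of \zcref{prop:similitude}, since by that proposition they are the lines through $O$ tangent to both $\L_3$ and $\L_3'$. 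Therefore $\ell$ and $\ell'$ are also tangent to the locus of $\text{inv}X_3$.

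The main obstacle is regularity. I must check that at $t_*$ the point $\text{inv}X_3$ is neither $O$ nor at infinity, and that its locus is not singular there. One caveat: a radial factor with $\rho'\neq0$ is harmless, but if both $\theta'$ and $\rho'$ vanish the point is a cusp. I would argue that $X_3\neq O$ at $t_*$, since otherwise $\ell$ would not be a tangent through $O$. Then $\text{inv}X_3$ is finite and nonzero, and its radial factor $r^2/|X_3-O|^2$ is smooth. If $\text{inv}X_3$ had a cusp at $t_*$, then so would $X_3=\text{inv}(\text{inv}X_3)$, because inversion is a diffeomorphism off $O$. That would contradict the smoothness of the conic $\L_3$. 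So tangency transfers cleanly, and the observation follows.
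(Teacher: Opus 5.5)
Your proposal is correct and follows the paper's route. The paper's one-line proof is that $X_3\mapsto\text{inv}X_3$ is a diffeomorphism away from $O$ that maps each line through $O$ to itself, so a line through $O$ tangent to $\mathcal{L}_3$ stays tangent to the image locus; your polar-coordinate criterion ($\theta'=0$ with a nonvanishing radial factor), together with the regularity check via $\text{inv}\circ\text{inv}=\mathrm{id}$, is a more detailed version of that same argument. The explicit factor $\mu$ for $X_3'$ is not needed for this statement, and its sign should be $\mu=r^2/(|X_3-O|^2-R^2)$, as in the appendix lemma on $|OX_3|/|OX_3'|$.
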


\begin{proof}
Direct also from the fact that the map
$\tau(X_3)= \text{inv}X_3$ is a diffeomorphism.
\end{proof}

This phenomenon is further illustrated in \zcref{fig:diffeo}.


\begin{figure}
\centering
\includegraphics[width=\linewidth]{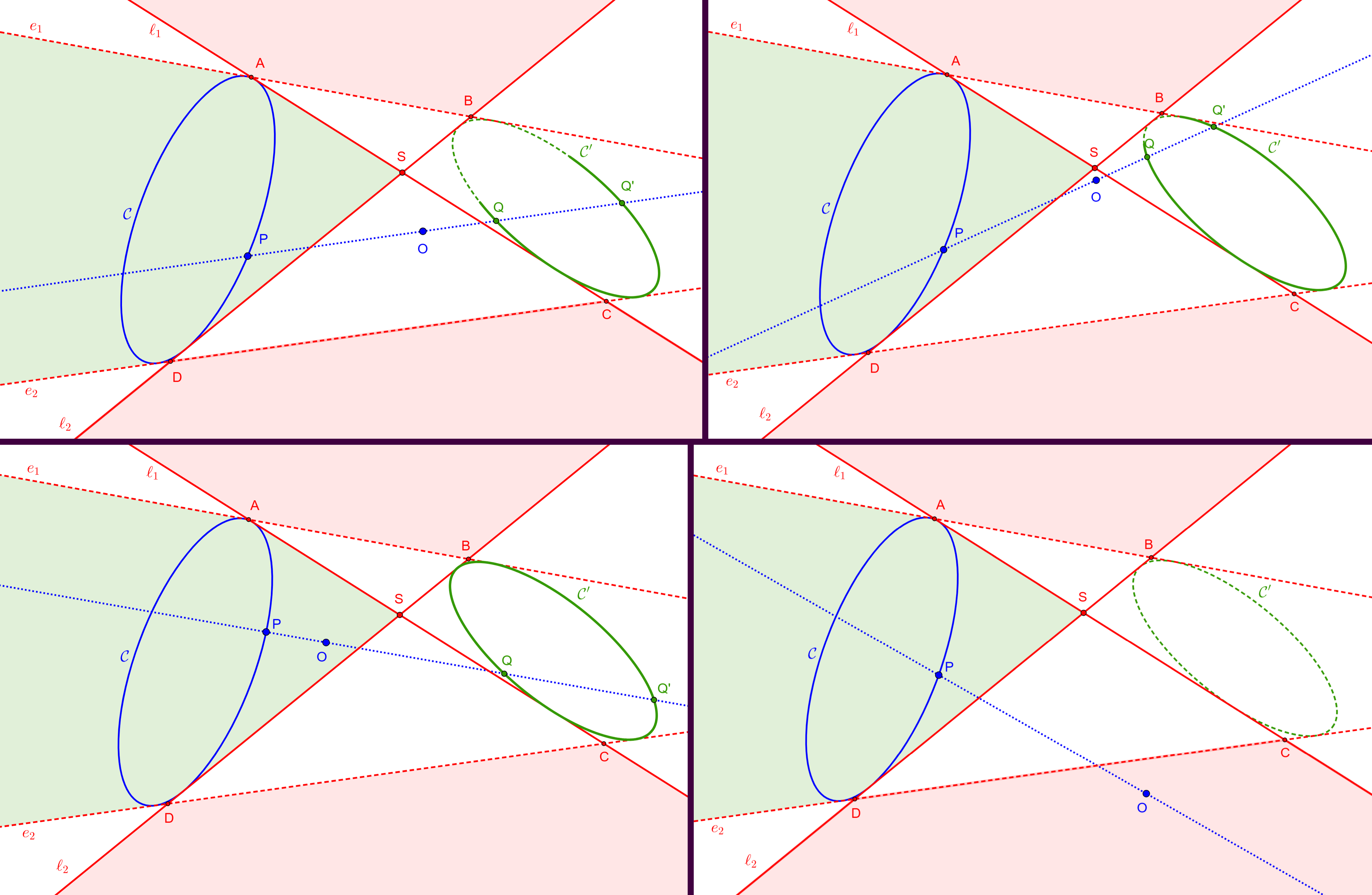}
\caption{In all four figures, $\mathcal{C},\mathcal{C}'$ are ellipses (blue, dashed green), and $\ell_1,\ell_2$ (resp. $\mathrm{e}_1,\mathrm{e}_2$) are common `internal' (resp. external) tangents, drawn in red (resp. dashed red). The former meet at their internal center of similitude $S$. Their other pairwise intersections are labeled $A,B,C,D$. $P$ is a point on $\mathcal{C}$, and $O$ is a fixed point. $Q$ and $Q'$ are the  intersections of $PO$ with $\mathcal{C}'$. \tb{top left}: $O$ is the $SDC$ triangle. Over a revolution of $P$ on $\mathcal{C}$, $Q,Q'$ only partially sweep $\mathcal{C}'$ (solid green); \tb{top right}: with $O$ closer to $S$ and still in $SDC$, achieves a nearly complete cover of $\mathcal{C}$ (missing piece is dashed green); \tb{bottom left}: $O$ is in the $DSA$ unbounded polygon (light green), and all of $\mathcal{C}'$ is swept; the same is achieved if $O$ is within another unbounded polygon region to the `right' of the external similitude center $S'=\mathrm{e}_1\cap\mathrm{e}_2$ (not shown) and between $\mathrm{e}_1,\mathrm{e}_2$; \tb{bottom right}: when $O$ is in either open polygons $DC$ or $AB$ (light red), none of $\mathcal{C}'$ is ever crossed by $PO$, i.e., $Q,Q'$ are always complex. \tb{Video}: \hrefs{https://youtu.be/QuBjhBwO248}}
\label{fig:diffeo}
\end{figure}

\section{Points of constant circumcircle and Euler's circle power}
\label{sec:locus-power}
\begin{lemma}
The power $\Pi$ of a point $z_0$ with respect to the circumcircle of a triangle $\{z_1,z_2,z_3\}$ is given by:
\[ \Pi=|z_0|^2+ \zeta z_0+\ol{\zeta}\ol{z_0},~
\text{with}~\zeta=\frac{|z_1|^2(\ol{ z_3 } - \ol{z_2})+|z_2|^2(\ol{ z_1 } - \ol{z_3})   + |z_3|^2(\ol{ z_2 } - \ol{z_1})}{z_1 (\ol{z_2} -\ol{z_3})+z_2( \ol{z_3}-\ol{z_1})+z_3 (\ol{z_1}-  \ol{z_2})}\]
\end{lemma}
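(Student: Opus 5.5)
The plan is to write the power as $\Pi=|z_0-c|^2-R^2$, where $c$ is the circumcenter and $R$ the circumradius, and then identify $\zeta$ with $-\ol{c}$.

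\emph{Step 1: reduce to the circumcircle equation.} Expanding,
\[
\Pi=|z_0|^2-\ol{c}\,z_0-c\,\ol{z_0}+|c|^2-R^2 .
\]
A circle has the form $|z|^2+\zeta z+\ol\zeta\ol z+\delta=0$ with $\zeta=-\ol c$ and $\delta=|c|^2-R^2$. So it suffices to show two things: the displayed $\zeta$ equals $-\ol{c}$, and $\delta=0$ for the circle in question. The second is not automatic. The formula as stated has no constant term, so it is correct exactly when $|c|=R$, i.e.\ when the circumcircle passes through the origin. In the general case the constant $\delta=|c|^2-R^2$ (the power of the origin) must be added, and I would state that correction explicitly.

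\emph{Step 2: identify $\zeta$.} Take the circumcenter formula quoted in the proof of \zcref{prop:circumcenter_inversive triangle}, from \cite[Lemma 6.24]{chen2024-euclidean}, and conjugate it:
\[
\ol{c}=\frac{\sum |z_1|^2(\ol{z_2}-\ol{z_3})}{\sum z_1(\ol{z_2}-\ol{z_3})},
\]
with cyclic sums. The numerator of the stated $\zeta$ is $\sum |z_1|^2(\ol{z_3}-\ol{z_2})$, which is minus this numerator. The denominator of $\zeta$ is exactly $\sum z_1(\ol{z_2}-\ol{z_3})$. Hence $\zeta=-\ol{c}$ and $\ol\zeta=-c$.

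\emph{Step 3: the constant term.} I would obtain $\delta$ directly by solving the linear system $|z_k|^2+\zeta z_k+\ol\zeta\ol{z_k}+\delta=0$ for $k=1,2,3$ by Cramer's rule. This expresses $\delta$ as a ratio of $3\times3$ determinants in $z_k,\ol{z_k},|z_k|^2$. Its denominator is the same alternating sum, which is purely imaginary and nonzero for nondegenerate triangles.

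\emph{Main obstacle.} The only delicate point is this normalization of $\delta$ and the resulting caveat on when the formula holds without a constant term. Everything else is a sign check against the quoted circumcenter formula. Since the statement is used later only to show that the power is affine in $z_0$, $\ol{z_0}$ (for finding $P_3$ and $P_5$), adding the constant $\delta$ does not affect that use.
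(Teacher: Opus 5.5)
Your argument is essentially the paper's own: the paper's one-line proof expands $|z_0-X_3|^2-|z_1-X_3|^2$, which is your $|z_0-c|^2-R^2$ with $R^2=|z_1-c|^2$, and your identification $\zeta=-\ol{c}$ is right. Your caveat is also correct, since that expansion leaves the constant $\ol{X_3}z_1+X_3\ol{z_1}-|z_1|^2=|X_3|^2-R^2$; the printed formula therefore holds only when the circumcircle passes through the origin (for vertices $1,-1,i$ it gives $|z_0|^2$ instead of $|z_0|^2-1$).

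One caution on your closing remark: this constant is the power of the origin, which in general varies with $\lambda$, so it is not inert when locating $P_3$. It is precisely the source of the term $(p^2-q^2)(\ol{f}+\ol{g})$ in the paper's linear equation for $P_3$, and the paper's $M_1,M_3$ do carry it. Without it that equation would reduce to $(q-\ol{f}\ol{g}p)w_0+(\ol{f}\ol{g}q-p)\ol{w_0}=0$, whose only solution is $w_0=0$, because $|q-\ol{f}\ol{g}p|^2-|p-\ol{f}\ol{g}q|^2=(|fg|^2-1)(p^2-q^2)\neq 0$.
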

\begin{proof}
Direct computation and simplification of $|z_0-X_3|^2-|z_1-X_3|^2$. 
\end{proof}

Let $\T$ be a generic Poncelet triangle family.
The following was conjectured in \cite[Conj.1]{helman2021-power-loci}, which we are now equipped to prove:

\begin{proposition}
Over $\T$, there is some fixed point $P_3$ (resp. $P_5$) such that its power with respect to the circumcircle (resp. Euler circle) of the family is invariant. 
\end{proposition}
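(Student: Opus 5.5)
The plan rests on the power formula of the preceding Lemma: for a point $z_0$, the power with respect to the circumcircle is
\[
\Pi(z_0)=|z_0|^2+\zeta z_0+\ol{\zeta}\,\ol{z_0}, \qquad \zeta=-\ol{X_3},
\]
so $\Pi(z_0)=|z_0|^2-2\,\mathrm{Re}(\ol{X_3}z_0)+c$, where $c=|X_3|^2-R^2$. Note that $c$ is itself the power of the origin.

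\textbf{Reduction to a linear condition.} Requiring $\Pi(z_0)$ to be constant over $\T$ means that $2\,\mathrm{Re}(\ol{X_3}z_0)-c$ is constant along the family. This is an affine-linear condition on $z_0$, viewed as a point of $\R^2$. Parametrize $\T$ as in \zcref{SymPar}, with the vertices written via the Blaschke parametrization and a parameter $\lambda$ on the unit circle. By \cite[Thm.2]{helman2021-power-loci}, $X_3$ depends on $\lambda$ as a fractional-linear expression in $\lambda,\ol\lambda$ with real-type denominator, and its locus is a conic. Compute $c=|X_3|^2-R^2$ in the same parametrization; $R^2=|z_1-X_3|^2$ is a symmetric function of the vertices and hence a rational function of $\lambda$.

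\textbf{Solving for $P_3$.} The key expectation is that the $\lambda$-dependent parts of $X_3$ and of $c$ live in a small span of functions: after clearing the common denominator $D(\lambda)=b_2\lambda+\ol{b_2}\,\ol\lambda+b_0$, everything should be spanned by $\{1,\lambda,\ol\lambda\}$, possibly together with $\lambda^2,\ol\lambda^2$. Write
\[
X_3=\frac{\alpha\lambda+\beta\ol\lambda+\gamma}{D},
\]
and expand the numerator $N(\lambda)$ of $2\,\mathrm{Re}(\ol{X_3}z_0)-c-k\,D(\lambda)\cdot(\cdot)$, multiplied by $D$. Setting the coefficients of $\lambda$ and $\ol\lambda$ to zero (they are conjugate, giving one complex linear equation in $z_0,\ol{z_0}$ together with the constant $k$), and the coefficient of $1$ to determine $k$, yields a linear system for $(z_0,\ol{z_0},k)$. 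Generically it has a unique solution, which defines $P_3$. Equivalently, and more geometrically, one can seek $P_3$ as the radical center of the circumcircle pencil. The quantity $\Pi$ is affine in the family of circles, so it suffices that the circumcircles, written as points $(X_3,c)$ in the 3-dimensional space of circles, lie in a plane; $P_3$ is then read off from that plane's equation. So the core check is that $(\mathrm{Re}X_3,\mathrm{Im}X_3,c)$ satisfies a fixed affine relation. This holds since, in the $\lambda$-expansion, these three functions together with $1$ are spanned by three functions $\lambda/D$, $\ol\lambda/D$ and $1/D$ once $c$ is shown to have the same denominator.

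\textbf{Nine-point circle and the main obstacle.} For the Euler circle, replace $X_3$ by $X_5=(X_3+X_4)/2$ with $X_4=z_1+z_2+z_3-2X_3$ (in the image coordinates $\A(z_k)$), and replace $R$ by $R/2$. The same affine-dependence argument then applies to $(X_5,\,|X_5|^2-R^2/4)$, giving $P_5$ with the constants $\kappa_i$ listed in \zcref{app:kappas}. I expect the main obstacle to be this step: showing that $|X_3|^2-R^2$ (and its Euler analogue) collapses to the form $(\mu\lambda+\ol\mu\ol\lambda+\nu)/D$ with the same denominator $D$, rather than to a higher-degree expression in $\lambda$. I would attack it by CAS simplification, using the symmetric-function identities $\sigma_1,\sigma_2,\sigma_3$ of the Blaschke parametrization, where $\sigma_3$ and $\sigma_1$ are linear in $\lambda$. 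I would reduce $R^2$ via $R^2=|z_1-X_3|^2$, symmetrized over the three vertices, and check the degree bound before solving the linear system.
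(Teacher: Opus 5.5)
Your proposal is correct and is essentially the paper's argument. The paper expands the power in the symmetric parametrization as $\Pi_3=M_1\lambda+\ol{M_1}\,\ol{\lambda}+M_3$ with $M_1$ affine in $w_0,\ol{w_0}$, obtains $P_3$ by solving $M_1=0$ (your vanishing of the $\lambda,\ol{\lambda}$ coefficients), and handles $P_5$ the same way via the medial triangle. The obstacle you anticipate is milder than you fear: in this parametrization $X_3$ and $|X_3|^2-R^2$ are already affine in $\lambda,\ol{\lambda}$ (your $D$ is constant and no $\lambda^2$ terms arise), and for the Euler circle the $\lambda^2$ terms of $|X_5|^2-R^2/4$ cancel, so the three-dimensional span you need holds directly.
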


Let $\Pi_3,\Pi_5$ represent the invariant powers at $P_3,P_5$, respectively. Referring to \zcref{fig:p3-p5}, we derive:

\begin{align}
P_3&= \frac{\left(g |f|^2+f |g|^2-f -g \right) p}{|f|^2  |g|^2-1}+\frac{\left(|f|^2 \ol{g}+\ol{f} |g|^2-\ol{f}-\ol{g}\right) q}{|f|^2|g|^2-1}\nonumber\\
\Pi_3 &=\frac{\left(|g|^2-1\right) \left(\ol{f} g -1\right) \left(f \ol{g}-1\right) \left(|f|^2-1\right) \left(pq( f g  +\ol{fg}  )  -p^{2}-q^{2}\right)}{\left(|f|^2 |g|^2 -1\right)^{2}}\nonumber\\
P_5&=\frac{\left(f g \left(f +g \right) p^{2} \left(\ol{fg} \;  p -q \right)+\ol{fg}\;  q^{2} \left(\ol{f}+\ol{g}\right) \left(f g q -p \right)\right) \left(p^{2}+q^{2}\right)}{2|f|^2|g|^2  \left(p^4+q^4+p^2q^2\right)-2 \left(f g +\ol{f} \ol{g}\right)  \, p q \left(p^{2}+q^{2}\right)+2 p^{2} q^{2}}\nonumber\\
\Pi_5&= (p^2 + q^2)(|fg|^2 - 1)\frac{\gamma_1}{\gamma_2}
\label{eqn:P5}
\end{align}

The long expressions for $\gamma_1,\gamma_2$ are provided in \zcref{app:kappas}.

\begin{proof}
This follows from algebraic calculations. 
Let $\mathcal{A}(z)=pz+q \ol{z}$ be the linear map and $\{z_1,z_2,z_3\}$ be a Poncelet triangle interscribed between the unit circle and an ellipse with foci $f$ and $g$.
Compute the triangle $w_i=\mathcal{A}(z_i)$ (i=1,2,3). The power of a point $w_0$ with respect to the circumcircle of $\{w_1,w_2,w_3\}$ is given by: \[ \Pi=|w_0|^2+ \zeta w_0+\ol{\zeta}\ol{w_0},~
\text{with}~\zeta=\frac{|w_1|^2(\ol{ w_3 } - \ol{w_2})+|w_2|^2(\ol{ w_1 } - \ol{w_3})   + |w_3|^2(\ol{ w_2 } - \ol{w_1})}{w_1 (\ol{w_2} -\ol{w_3})+w_2( \ol{w_3}-\ol{w_1})+w_3 (\ol{w_1}-  \ol{w_2})}.\]

Using $w_i=pz_i+q/z_i$, $\ol{w_i}=p/z_i+qz_i$ and the symmetric parametrization in \zcref{app:symmetric}, it follows that
the power $\Pi_3$ at $w_0$ is given by:
 
\begin{align*}   
\Pi_3=& \;M_1\lambda+M_2\ol{\lambda}+M_3\\
 M_1( p^{2}-  q^{2})=&   -pq[(\ol{f}\ol{g}p - q)w_0+ (-\ol{f}\ol{g}  q + p)\ol{w_0} - (p^2 - q^2) (\ol{f}+\ol{g}  )],
\\
M_2=&\;\ol{M_1},\\
M_3(p^2-q^2)=&\; -p q \left(p f -\ol{f} q +p g -\ol{g}q \right) {w_0} +\left(p^2-q^2\right)   |w_0|^2 +p q \left(q f -\ol{f} p +g q -\ol{g}p \right) \ol{w_0} \\
&+\left(p^2 -q^2 \right)   \left(f g p q +\ol{f} \ol{g}p q -p^{2}-q^{2}\right)
\\
\end{align*}
$\Pi_3$ is independent of $\lambda$ if and only if
$\left(-\ol{f} \ol{g} p +q \right) w_0 +\left(\ol{f} \ol{g} q -p \right) \ol{w_0} +\left(p^2 -q^2 \right)   \left(\ol{f} + \ol{g}\right)=0$. $P_3$ is obtained by
solving this equation. $P_5$ is obtained similarly, by considering the medial triangle of $\{w_1,w_2,w_3\}$, whose circumcircle is Euler's circle. 
\end{proof}
Referring to \zcref{fig:x56}, we note that if $\E'$ is a circle (Chapple's porism), i.e., $f=g$,  $P_3=2f/(1+|f|^2)$ which coincides with the (complex) external similitude center of $\E$ and $\E'$, or $X_{56}$ on \cite{etc}. Indeed, it is stationary over the family. It turns out that the center of $\E'$ holds constant power with respect to Euler's circle, i.e., it is $P_5$.

\begin{figure}
\centering
\includegraphics[width=0.5\linewidth]{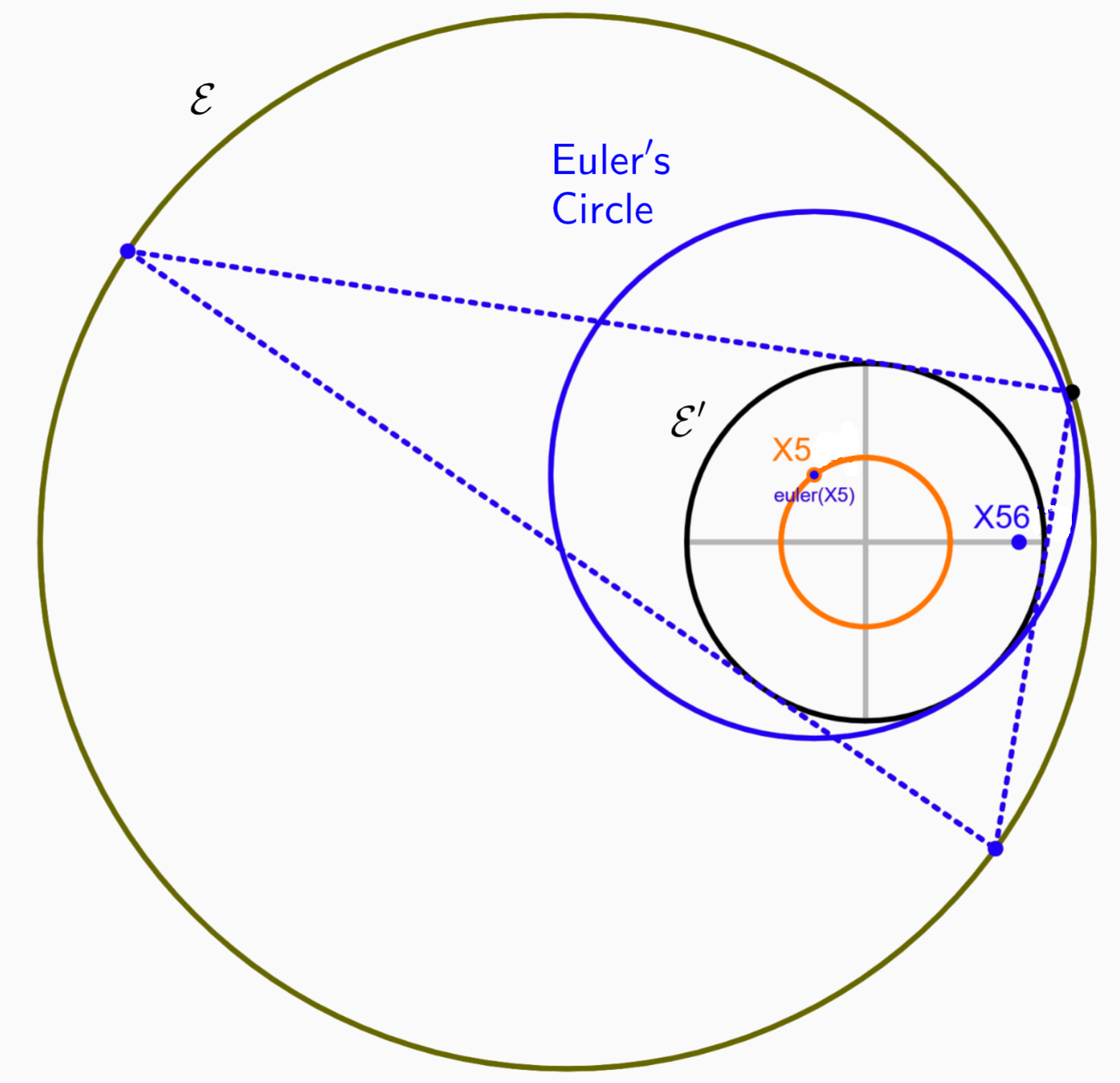}
\caption{When $\E$ and $\E'$ are circles ($f=g$ at the center of $\E'$), you get Chapple's porism (dashed blue triangles), for which $X_{56}$ is stationary and coincides with $P_3$. Since $\E$ is the circumcircle, Euler's circle (solid blue) has constant radius at half the circumradius. It is tangent to $\E'$. Its center $X_5$ moves along a circle (solid orange) concentric with $\E'$, and there it maintains constant power with respect to Euler's circle. \tb{Live}: \hrefs{https://bit.ly/4tatw7s}}
\label{fig:x56}
\end{figure}

Experimentally, we observed that:

\begin{proposition}
$P_3$ is always interior to $\E'$.
\end{proposition}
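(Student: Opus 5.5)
The plan is to pull everything back to the normalized model of \zcref{app:symmetric}, where the outer conic is the unit circle and the caustic is an ellipse with foci $f,g$. There the claim reduces to a one-line focal-distance inequality.

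\emph{Reduction.} The map $\A(z)=pz+q\ol z$ is a real-linear bijection of the plane sending the unit circle to $\E$ and the caustic with foci $f,g$ to $\E'$. It therefore preserves the interior of the caustic. Write $D=|f|^2|g|^2-1$ and set
\[Q=\frac{g|f|^2+f|g|^2-f-g}{D}.\]
Since $D$ is real, $\ol Q=(\ol g|f|^2+\ol f|g|^2-\ol f-\ol g)/D$. Comparing with the formula for $P_3$ gives $P_3=pQ+q\ol Q=\A(Q)$. So it suffices to show that $Q$ lies inside the ellipse with foci $f,g$.

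\emph{Describing the caustic.} I will invoke the classical fact (Blaschke-product description of triangles in the unit circle, as in \zcref{app:symmetric}) that the caustic is
\[|z-f|+|z-g|=|1-\ol f g|.\]
Interiority of $Q$ is then the inequality $|Q-f|+|Q-g|<|1-\ol f g|$.

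\emph{Computing the focal distances.} A direct computation gives
\[Q-f=\frac{(|f|^2-1)\,g\,(1-f\ol g)}{D},\qquad Q-g=\frac{(|g|^2-1)\,f\,(1-\ol f g)}{D}.\]
Using $|1-f\ol g|=|1-\ol f g|$ and writing $a=|f|$, $b=|g|$ (both in $[0,1)$ because the foci are inside the unit disk), we obtain
\[|Q-f|+|Q-g|=|1-\ol f g|\,\frac{b(1-a^2)+a(1-b^2)}{1-a^2b^2}.\]
The inequality is then equivalent to $1-a^2b^2-(a+b)(1-ab)>0$. This expression factors as
\[(1-ab)\bigl(1+ab-a-b\bigr)=(1-ab)(1-a)(1-b),\]
which is strictly positive for $a,b<1$. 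Hence $Q$, and therefore $P_3$, is interior to the caustic.

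The only step needing care is the identification of the caustic's focal-sum constant as $|1-\ol f g|$. If that normalization differs in \zcref{app:symmetric}, I would rederive it from the parametrization. One way is to check that at the degenerate $f=g$ (Chapple) it reduces to the known incircle radius. The other is to compute the tangency point of one side $z_1z_2$ with the caustic and sum its distances to $f$ and $g$.

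The remaining algebra (the factorizations of $Q-f$ and $Q-g$, and the final polynomial identity) is routine. As a consistency check, at $f=g$ we get $Q=2f/(1+|f|^2)$, matching the $X_{56}$ remark above.
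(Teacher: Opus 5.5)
Your proposal is correct and follows essentially the same route as the paper. The paper also pulls back by $\mathcal{A}(z)=pz+q\overline{z}$ and takes the caustic to be $|z-f|+|z-g|=|1-\overline{f}g|$. It computes the same two focal distances and reduces interiority to $(1-|f|)(1-|g|)>0$; the focal-sum normalization you flag is the standard Blaschke-ellipse fact the paper also assumes without proof.
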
 

\begin{proof}
 $P_3$  is the image by the map
$\mathcal{A}(z)=pz+q\ol{z}$ of the point $p_3=\dfrac{f+g-\left(\ol{f}+\ol{g} \right)f g}{1-|fg|^2}$. So, this is equivalent to verifying that $p_3$ is in the interior of the ellipse $|z-f|+|z-g|=|1-\ol{f}g|$. Direct calculations show that:
\begin{align*}
|z - f| = &\frac{1 - |f|^2}{1 - |f|^2|g|^2} |g|\,|1 - \bar{f}g|,\quad
|z - g| =  \frac{1 - |g|^2}{1 - |f|^2|g|^2} |f|\,|1 - f\bar{g}|.
\end{align*}
Therefore, as $|1-\ol{f}g|=|1-f\ol{g}|$, it follows that:
\[|z - f| + |z - g| = |1 - \bar{f}g| \;\left( \frac{|f| + |g|}{1 + |f||g|} \right). \]
Since $|f|<1$  and $|g|<1$ it follows that
$\frac{|f| + |g|}{1 + |f||g|} <1$. In fact, this inequality is equivalent to $(1-|f|)(1-|g|)>0$.
\end{proof}

Returning to our original problem, namely which involves a family of inversive triangles, consider the special case when the center of inversion coincides with $P_3$. Referring to \zcref{fig:O-on-p3}:

\begin{corollary}
$\L_3'$ is  homothetic (translated and scaled) to $\L_3$.
\end{corollary}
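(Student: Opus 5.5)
The corollary should follow from the collinearity of $X_3$, $O$, $X_3'$ (shown in \zcref{app:inversive}) combined with the constant-power property of $P_3$. Let $O=P_3$ and let $\Pi_3$ be the invariant power of $P_3$ with respect to the circumcircle $\Gamma$ of $T=ABC$. Inversion in $\mathcal{K}=(O,r)$ sends $\Gamma$ to the circumcircle $\Gamma'$ of $T'=A'B'C'$. I will use the standard fact that the image of a circle not through $O$ is another circle, with $\Gamma'$ the image of $\Gamma$ under the homothety centered at $O$ with ratio $k=r^2/\Pi$, where $\Pi$ is the power of $O$ with respect to $\Gamma$.

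To justify this fact, take any line through $O$ meeting $\Gamma$ at $U,V$. Then $\vec{OU}\cdot\vec{OV}=\Pi$. The inverse of $U$ is $U^*=O+\tfrac{r^2}{\Pi}(V-O)$, since $|OU^*|=r^2/|OU|=r^2|OV|/\Pi$ and the signs agree. So $\Gamma'$ is the image of $\Gamma$ under $h_O^{k}$. In particular the center of $\Gamma'$ satisfies
\[X_3' = O + \frac{r^2}{\Pi}\,(X_3 - O).\]
This is consistent with the collinearity result.

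Now specialize to $O=P_3$, where $\Pi=\Pi_3$ is the same for every triangle of $\T$. The map $X_3\mapsto X_3'$ is then one fixed homothety with center $P_3$ and ratio $r^2/\Pi_3$ for the whole family. Applying it to $\L_3$ gives exactly $\L_3'$, so $\L_3'$ is homothetic to $\L_3$: a scaling by $r^2/\Pi_3$ about $P_3$, which equals a scaling about the origin composed with a translation. The same conclusion can be read off from \zcref{prop:circumcenter_inversive triangle}. With $z_0=P_3$, the fractional-linear expression in $\lambda$ should collapse so that the denominator $b_2\lambda+b_1\ol\lambda+b_0$ equals $\Pi_3$ up to a constant factor. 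By the computation in the proof of the power proposition, this happens because $b_2$ is proportional to the coefficient $M_1$ that vanishes at $P_3$.

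The main obstacle is the sign and degeneracy of $\Pi_3$. I must check that $\Pi_3\neq 0$, so that $O$ never lies on a circumcircle. This holds because $P_3$ is interior to $\E'$ by the preceding proposition, while every circumcircle passes through the points of $\E$ and contains $\E'$'s tangent triangle. Hence, by the formula for $\Pi_3$, the power is a nonzero constant, in fact negative. The ratio is therefore negative, so the homothety includes a half-turn. I would note that this still counts as a homothety and that it matches $O$ being an internal similitude center, as in \zcref{prop:similitude}. By \zcref{prop:exterior}, with $O$ inside $\Rm$ one would otherwise expect a hyperbola. So the remaining step is to confirm from the explicit $\Pi_3$ that $O=P_3$ is never crossed by a circumcircle, which reconciles the two results.
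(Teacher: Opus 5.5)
Your proof is correct and follows essentially the same route as the paper: collinearity of $X_3$, $O$, $X_3'$ plus the inversion relation $X_3'=O+\frac{r^2}{\Pi}(X_3-O)$ from the ratio lemma in the appendix, which becomes a single fixed homothety once $O=P_3$ makes $\Pi=\Pi_3$ constant. Your signed ratio and your check that $\Pi_3<0$ are slightly more careful than the paper's unsigned statement. The ``remaining step'' at the end is already settled: $\Pi_3<0$ means $P_3$ lies strictly inside every circumcircle, so it is never crossed and is exterior (not interior) to the region swept by the circumcircles; the conic-type proposition therefore predicts an ellipse, and there is nothing to reconcile.
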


\begin{proof}
$X_3'$ is collinear with $X_3$ and $O$. $|O X_3|/|O X_3'|$ is proportional to $\Pi_3$ (\zcref{lem:ratio-ox3}), and the latter is constant.
\end{proof}

\begin{figure}
\centering
\includegraphics[width=\linewidth]{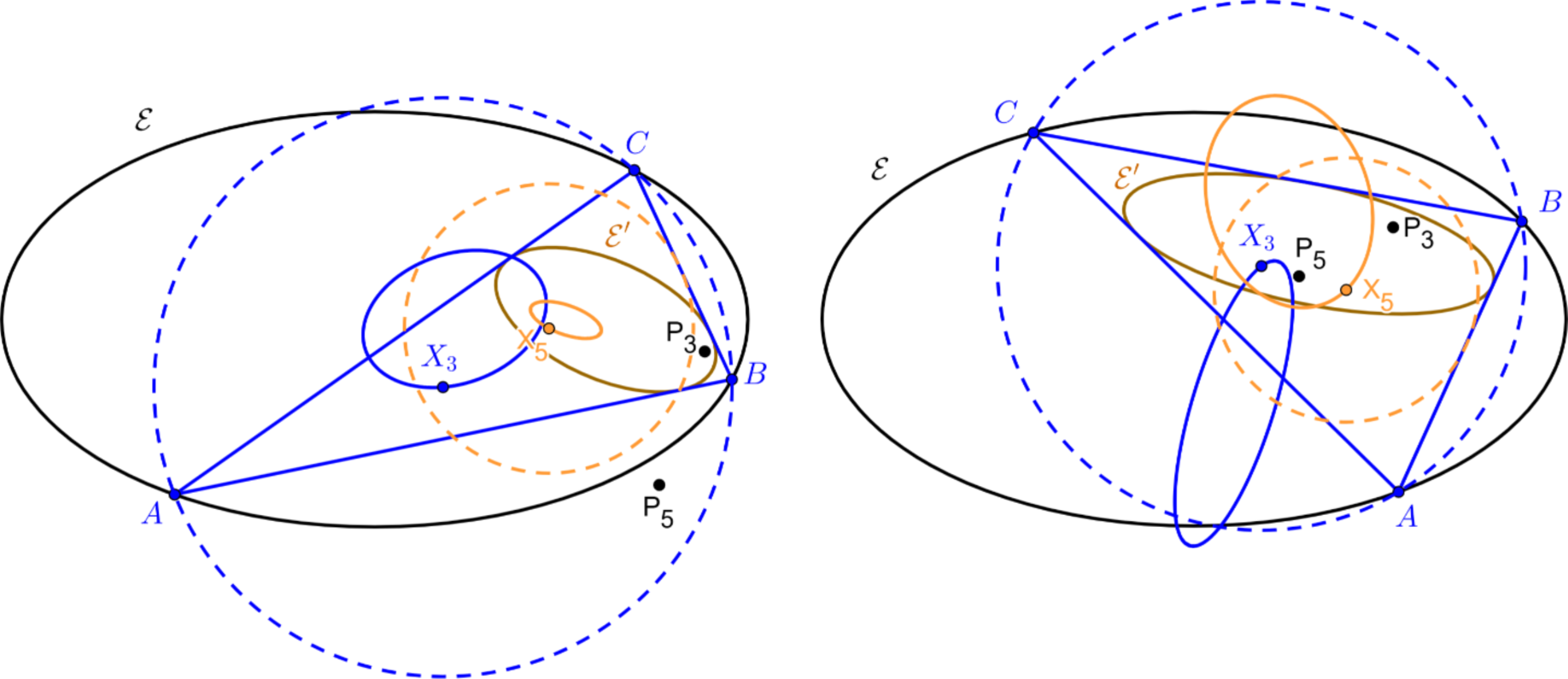}
\caption{Points $P_3$ and $P_5$ have constant power with respect to the circumcircle (dashed blue) and Euler's circle (dashed orange), respectively, for two generic Poncelet triangle families (left and right). Both are inscribed in the same outer $\E$, but circumscribe distinct inner $\E'$. Note that in both cases, $P_3$ is interior to $\E'$. Also shown are the elliptic loci of the circumcenter $X_3$ (blue) and Euler center $X_5$ (orange). \tb{Video}: \hrefs{https://youtu.be/6O6KC2bmJxw}}
\label{fig:p3-p5}
 \end{figure}

\begin{figure}
\centering
\includegraphics[width=0.8\linewidth]{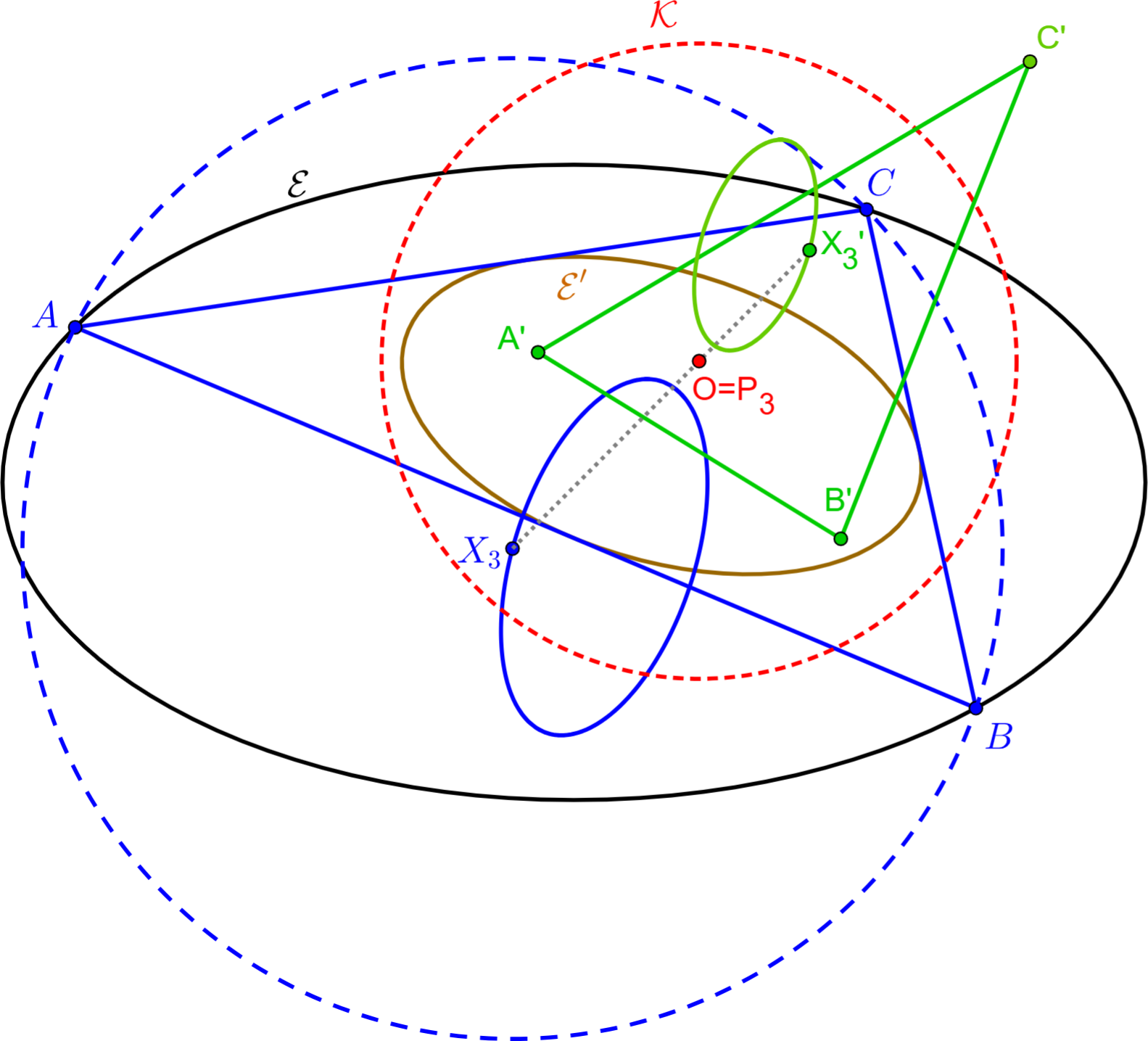}
\caption{The center $O$ of the inversion circle $\mathcal{K}$ (dashed red) is made to coincide with the point $P_3$ of constant power with respect to the circumcircle (dashed blue). In such a case, the locus of $X_3'$ (green) is a scaled/translated image of the locus of $X_3$ (blue).}
\label{fig:O-on-p3}
\end{figure}

\section*{Acknowledgements}
\noindent We would like to thank 
A. Akopyan, D. Laurain, R. Schwartz and S. Tabachnikov for their comments and encouragement. The first author is a fellow of CNPq.

\appendix

\section{Symmetric parametrization}
\label{app:symmetric}
Identifying $\mathbb{R}^2$ with $\mathbb{C}$, consider the following parameterization for Poncelet triangles inscribed in $\mathbb{T}$, the unit circle centered at the origin, as derived in \cite[Def. 3]{helman2021-power-loci} and based on the work in \cite{daepp2019-ellipses} on Blaschke products:
\begin{theorem}
For any Poncelet family of triangles inscribed in the unit circle $\mathbb{T}$ and circumscribing a nested ellipse with foci $f,g\in\mathbb{D}$ (the unit disk), parametrize its vertices $z_1,z_2,z_3\in\mathbb{T}$ as the following elementary symmetric polynomials:
\begin{align*}
z_1+z_2+z_3=& f+g+\l\ol f \ol g , \\
z_1 z_2+z_2 z_3+z_3 z_1=& f g+\l(\ol f+\ol g),\\
z_1 z_2 z_3=& \l,
\end{align*}
where the free parameter $\l=e^{i \theta}$, $\theta\in[0,2\pi]$.
\label{SymPar}
\end{theorem}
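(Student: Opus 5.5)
The plan is to recognize the three symmetric relations as the coefficient identities of a single cubic equation, namely $B(z)=\l$ for the degree-three Blaschke product
\[
B(z)=z\,\frac{(z-f)(z-g)}{(1-\ol f z)(1-\ol g z)},
\]
whose zeros are $0,f,g\in\mathbb{D}$. Clearing denominators, $B(z)=\l$ is equivalent to $z(z-f)(z-g)=\l(1-\ol f z)(1-\ol g z)$. Expanding both sides gives
\[
z^3-(f+g+\l\ol f\ol g)z^2+(fg+\l(\ol f+\ol g))z-\l=0 .
\]
By Vieta's formulas, its roots $z_1,z_2,z_3$ satisfy exactly the three relations in the statement. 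So the statement is equivalent to the following claim: for each $\l\in\mathbb{T}$, the preimage $B^{-1}(\l)$ is the vertex set of a Poncelet triangle of the family, and every triangle of the family arises in this way.

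First I would check that $B^{-1}(\l)$ consists of three distinct points of $\mathbb{T}$. A finite Blaschke product maps $\mathbb{T}$ to $\mathbb{T}$ and $\mathbb{D}$ to $\mathbb{D}$, so a root of $B(z)=\l$ with $|\l|=1$ cannot lie in $\mathbb{D}$. By the symmetry $B(1/\ol z)=1/\ol{B(z)}$ it cannot lie outside $\ol{\mathbb{D}}$ either. Hence all three roots lie on $\mathbb{T}$. On $\mathbb{T}$ the argument of $B$ is strictly increasing, since $zB'(z)/B(z)=\sum_{a\in\{0,f,g\}}(1-|a|^2)/|z-a|^2>0$. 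Therefore $B|_{\mathbb{T}}$ is a three-fold covering, and the three roots are distinct.

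Next comes the geometric core, which I take from Daepp--Gorkin--Voss \cite{daepp2019-ellipses}: for a degree-three Blaschke product with zeros $0,f,g$, the three chords joining the points of $B^{-1}(\l)$ are tangent to the ellipse $E$ with foci $f,g$ and major axis $|1-\ol f g|$, for every $\l\in\mathbb{T}$. This is the step I expect to be the main obstacle if it has to be proved rather than cited. In that case I would argue as follows. Write $\l=z_1z_2z_3$, so that the relations give $f+g=\sigma_1-\ol{\sigma_2}\,z_1z_2z_3$ together with the conjugate identity. The chord $z_jz_k$ consists of the points $w$ with $w+z_jz_k\ol w=z_j+z_k$. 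One then verifies the tangency criterion for a line and an ellipse with given foci: the product of the distances from $f$ and from $g$ to the line equals the squared semi-minor axis, $\tfrac14(|1-\ol f g|^2-|f-g|^2)=\tfrac14(1-|f|^2)(1-|g|^2)$. This identity is symmetric in the three chords, and I would check it by direct computation from the Vieta relations.

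Finally, surjectivity. Suppose a Poncelet family is inscribed in $\mathbb{T}$ and circumscribes an ellipse $E'$ with foci $f,g$. By the step above, the Blaschke family built from $f,g$ is a one-parameter family of triangles inscribed in $\mathbb{T}$ and circumscribing the ellipse $E$ with the same foci. A Poncelet triangle circumscribing a confocal ellipse different from $E'$ would contradict the porism, because from a given starting vertex the tangent construction is unique. Hence $E=E'$. Now take any triangle of the family and let $z_1$ be one of its vertices. Since $B|_{\mathbb{T}}$ is onto $\mathbb{T}$, we may set $\l=B(z_1)$. The Blaschke triangle for this $\l$ contains $z_1$ and circumscribes $E'$, so by uniqueness of the Poncelet construction it coincides with the given triangle. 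As $\theta$ runs over $[0,2\pi]$, the parameter $\l=e^{i\theta}$ therefore sweeps the whole family.
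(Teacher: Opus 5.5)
The paper does not prove this statement. It imports it from \cite{helman2021-power-loci}, which rests on the Blaschke-product theory of \cite{daepp2019-ellipses}, so your framing matches the source, and most of your argument checks out. The expansion of $B(z)=\lambda$ gives exactly the three relations, and your winding argument puts three distinct roots on $\mathbb{T}$. The tangency check you expect to be the main obstacle is actually short. Let $s=z_j+z_k$, $p=z_jz_k$, and let $z_i$ be the third root. Then the first two relations read $z_i(1-p\overline{f}\,\overline{g})=f+g-s$ and $z_i\bigl(s-p(\overline{f}+\overline{g})\bigr)=fg-p$. Eliminating $z_i$ gives precisely $(f+p\overline{f}-s)(g+p\overline{g}-s)=p(1-|f|^2)(1-|g|^2)$. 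Since $(w+p\overline{w}-s)/(2m)$, with $m^2=p$ and $|m|=1$, is the signed distance from $w$ to the chord, this is the signed form of your focal criterion. You need the signed form, because the criterion also requires both foci to lie on the same side of the line.

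The genuine gap is the step $E=E'$ in your surjectivity paragraph, and this is where the content of the statement lies. The hypothesis fixes only the foci of the caustic, and the statement asserts that the major axis is then forced to be $|1-\overline{f}g|$. Uniqueness of the tangent construction from a fixed vertex holds separately for $E$ and for $E'$. Nothing in the porism prevents two different caustics from both closing after three steps, so your stated reason does not give $E=E'$.

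What is missing is a monotonicity argument. The confocal ellipses $|w-f|+|w-g|=t$ are strictly nested. Suppose $E\subset\mathrm{int}\,E'$. Then for every $z\in\mathbb{T}$ the counterclockwise Poncelet step $T_{E'}(z)$ falls strictly short of $T_E(z)$. This is because the left side of the chord from $z$ shrinks as its endpoint advances, and $E$ stays at positive distance from any chord tangent to $E'$. For increasing lifts this gives $\tilde T_{E'}<\tilde T_E$, hence $\tilde T_{E'}^{\,3}(x)<\tilde T_E^{\,3}(x)=x+2\pi$, so $E'$ cannot carry closed triangles. The case $E'\subset\mathrm{int}\,E$ is symmetric.

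Alternatively, you can bypass $E=E'$ altogether.
\begin{itemize}
\item The foci of an ellipse inscribed in a triangle are isogonal conjugates.
\item For a triangle on $\mathbb{T}$, the points $f,g$ are isogonal conjugates iff $z_1+z_2+z_3=f+g+z_1z_2z_3\,\overline{f}\,\overline{g}$; subtracting the isogonality conditions at two vertices already forces this.
\item Conjugating with $\overline{z_k}=1/z_k$ gives the second relation, so every triangle of the family satisfies the system with $\lambda=z_1z_2z_3$.
\item Every point of $\mathbb{T}$ is a vertex of some triangle of the family, and $B|_{\mathbb{T}}$ is onto, so every $\lambda\in\mathbb{T}$ occurs.
\end{itemize}
Either route closes the gap; as written, your proof does not.
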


This is generalized to a Poncelet triangle family $\T$ interscribed between any two nested ellipses $\E,\E_c$ by applying an affine transformation that sends $\mathbb{T}$ to $\E$. Let $z_1,z_2,z_3\in\E$ be the varying vertices of the Poncelet triangles. The statements below are reproduced from \cite[Sec.2.2]{garcia2024-incircle}, where their proofs can also be found: 

\begin{theorem}
For any symmetric rational function $\F:\mathbb{C}^3\rightarrow\mathbb{C}$, the value of $\F(z_1,z_2,z_3)$ can be parameterized as a rational function of a parameter $\lambda$ on $\mathbb{T}$.
\end{theorem}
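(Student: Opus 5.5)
The plan is to reduce the statement to the fundamental theorem of symmetric functions and then substitute the symmetric parametrization of \zcref{SymPar}. I would first treat triangles inscribed in the unit circle $\mathbb{T}$, and then handle vertices on $\E$ by pulling back through the affine map $\A(z)=pz+q\ol{z}$.

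\textbf{Step 1: reduce to elementary symmetric polynomials.} Write $\F=P/Q$ with $P,Q$ polynomials, not assuming $P$ and $Q$ are separately symmetric. Multiply numerator and denominator by $\prod_{\sigma\neq \mathrm{id}} Q\circ\sigma$, the product over the nontrivial permutations of $S_3$. The new denominator $\prod_{\sigma\in S_3}Q\circ\sigma$ is symmetric. Since $\F$ is symmetric, the new numerator equals $\F$ times a symmetric polynomial, so it is also symmetric. By the fundamental theorem on symmetric polynomials, both are polynomials in $e_1=z_1+z_2+z_3$, $e_2=z_1z_2+z_2z_3+z_3z_1$ and $e_3=z_1z_2z_3$. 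Hence $\F=\tilde P(e_1,e_2,e_3)/\tilde Q(e_1,e_2,e_3)$.

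\textbf{Step 2: substitute the parametrization.} By \zcref{SymPar}, $e_1=f+g+\lambda\ol f\ol g$, $e_2=fg+\lambda(\ol f+\ol g)$ and $e_3=\lambda$. Each is a polynomial of degree at most one in $\lambda$, with coefficients depending only on $f,g,\ol f,\ol g$. Substituting gives $\F(z_1,z_2,z_3)$ as a rational function of $\lambda$.

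\textbf{Step 3: vertices on $\E$ and conjugates.} If the vertices lie on $\E$, they are $w_i=\A(z_i)$ with $z_i\in\mathbb{T}$. On $\mathbb{T}$ we have $\ol{z_i}=1/z_i$, so $w_i=pz_i+q/z_i$ and $\ol{w_i}=\ol p/z_i+\ol q z_i$. Both are rational in $z_i$ alone (and equal $pz_i+q/z_i$ and $p/z_i+qz_i$ when $p,q$ are real, as in the paper). Therefore any expression $\F(w_1,w_2,w_3)$ that is symmetric under simultaneous permutation of the $w_i$ and $\ol{w_i}$ becomes a symmetric rational function of $z_1,z_2,z_3$. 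Steps 1 and 2 then apply. Any residual $\ol\lambda$ is replaced by $1/\lambda$, since $|\lambda|=1$. This is exactly the mechanism used in the proofs of \zcref{prop:circumcenter_inversive triangle} and of the power formulas.

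\textbf{Main obstacle.} The delicate point is that the resulting rational function of $\lambda$ must be well-defined, i.e.\ the denominator $\tilde Q(e_1(\lambda),e_2(\lambda),e_3(\lambda))$ must not vanish identically on $\mathbb{T}$. I would handle this by interpreting the claim as an identity of rational functions, valid wherever $\F$ is defined on the family. If $Q$ vanished on every Poncelet triangle of the family, $\F$ would be undefined on the whole family and there would be nothing to prove. Otherwise $\tilde Q(\lambda)$ is a nonzero rational function, so it has only finitely many zeros on $\mathbb{T}$. At all other $\lambda$, the formula agrees with $\F$ by construction.
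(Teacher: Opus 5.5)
Your proposal is correct and takes essentially the route the paper relies on (the paper itself defers the proof to its earlier reference): express the symmetric rational function through $e_1,e_2,e_3$, then substitute the expressions from \zcref{SymPar}, which are affine in $\lambda$, using $\overline{\lambda}=1/\lambda$ for any conjugates. One tightening is worth making: take $P/Q$ in lowest terms from the start, so that $Q\circ\sigma=\pm Q$ for every $\sigma\in S_3$ and hence $\prod_{\sigma}Q\circ\sigma=\pm Q^6$. Without this, your degeneracy discussion conflates vanishing of $Q$ with vanishing of the symmetrized denominator, and a common factor of $P$ and $Q$ that vanishes on the family would break it.
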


Let $a,b$ denote the semiaxis' lengths of $\E$, i.e., it satisfies $(x/a)^2+(y/b)^2=1$. Consider the affine transformation $\A(x,y)=(a x,b y)$ which sends the unit circle into $\E$. So $\A^{-1}(x,y)=(x/a,y/b)$. In the complex plane, $\A(z):=\frac{(a+b)}{2}z+\frac{(a-b)}{2}\ol{z}$. $\A^{-1}(z)=\frac{(1/a+1/b)}{2}z+\frac{(1/a-1/b)}{2}\ol{z}$. Let $c^2=a^2-b^2$. 

\begin{lemma}
When the inner ellipse $\E_c$ of the Poncelet triangle family is a circle, $\E_{pre}:=\A^{-1}(\E_c)$ is an axis-aligned ellipse with major semiaxis $r/b$ and minor semiaxis $r/a$, center $\A^{-1}(C)=x_c/a+i y_c/b$, and semi-focal length $r\frac{c}{a b}$, with foci given by $x_c/a+i (y_c/b\pm r\frac{c}{a b})$.
In particular, the sum and product of the foci of $\E_{pre}$ as complex numbers are given by:
\begin{align*}
f_{pre}+g_{pre}=&\frac{2x_c}{a}+\frac{2y_c}{b}\rc\\
f_{pre}{\cdot}g_{pre}=& \frac{a^2+b^2}{c^2}+\frac{2i}{a b}\left(x_c y_c+\frac1c \sqrt{(a^4-c^2 x_c^2)(b^4-c^2 y_c^2)}\right)\cdot
\end{align*}
\label{CircleTransformLemma}
\end{lemma}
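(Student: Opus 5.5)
The first part of the statement is a direct coordinate computation, and I would do it first. The circle $\E_c$ of radius $r$ centered at $C=x_c+iy_c$ has equation $(x-x_c)^2+(y-y_c)^2=r^2$. Since $\A(u,v)=(au,bv)$, a point $(u,v)$ lies on $\E_{pre}=\A^{-1}(\E_c)$ if and only if $(au-x_c)^2+(bv-y_c)^2=r^2$. Dividing through gives
\[
\frac{(u-x_c/a)^2}{(r/a)^2}+\frac{(v-y_c/b)^2}{(r/b)^2}=1 .
\]
This is an axis-aligned ellipse centered at $\A^{-1}(C)=x_c/a+iy_c/b$. Because $a>b$ we have $r/b>r/a$, so the major semiaxis $r/b$ is vertical and the minor semiaxis is $r/a$. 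The semi-focal length is then
\[
\sqrt{r^2/b^2-r^2/a^2}=r\sqrt{a^2-b^2}/(ab)=rc/(ab).
\]
As the major axis is vertical, the foci are the center displaced by $\pm i\,rc/(ab)$, which gives the claimed foci.

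The sum and product of the foci come next. The sum is twice the center, namely $2x_c/a+2iy_c/b$; the paper's macro after $2y_c/b$ presumably denotes the imaginary unit. For the product, multiplying the two foci gives the difference of squares
\[
f_{pre}g_{pre}=\Big(\frac{x_c}{a}+\frac{iy_c}{b}\Big)^2+\frac{r^2c^2}{a^2b^2}.
\]
At this stage the product still depends on $r$, whereas the stated formula has no $r$ but does contain a square root. So the second step is to eliminate $r$ using Cayley's condition for $n=3$, which is the standing hypothesis that $\E,\E_c$ admit Poncelet triangles. For an ellipse and an interior circle this condition determines $r^2$ as a function of $(x_c,y_c)$. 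I would obtain it via \zcref{SymPar} rather than from a general Cayley determinant. First translate the requirement that $\E_{pre}$ has foci $f_{pre},g_{pre}$ with a triangle family inscribed in $\mathbb{T}$. Then use the classical fact that an ellipse with foci $f,g$ inscribed in a triangle of $\mathbb{T}$ satisfies $|1-\ol f g|=$ its major axis length. This yields the relation $2r/b=|1-\ol{f_{pre}}g_{pre}|$. I would solve it for $r$, which is where the square root $\sqrt{(a^4-c^2x_c^2)(b^4-c^2y_c^2)}$ should appear, and substitute the result into the product.

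I expect the main obstacle to be this final reconciliation with the displayed formula. A sanity check at $x_c=y_c=0$, where $r=ab/(a+b)$, gives $f_{pre}g_{pre}=c^2/(a+b)^2=(a-b)/(a+b)$. I would compare this value against the displayed expression to pin down normalization conventions, namely which root is taken, whether the constant term is $(a^2+b^2)/c^2$ after rescaling, and which factor multiplies the imaginary part. Only after fixing those conventions would I carry out the substitution symbolically, ideally with a CAS as elsewhere in the paper.
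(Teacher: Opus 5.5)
Your derivation of the first part is correct: the preimage equation, the semiaxes $r/a$ and $r/b$, the semi-focal length $rc/(ab)$, the foci and their sum. It is the routine computation, and the paper gives no proof of its own here, only a pointer to \cite{garcia2024-incircle}. The gap is the product formula. You never derive it, and the step you postpone cannot succeed: there are no ``normalization conventions'' that reconcile your expression with the displayed one, because the displayed formula is wrong as printed. Your own check already shows this. At $x_c=y_c=0$ the true value is $(a-b)/(a+b)$, while the displayed right-hand side is $\frac{a^2+b^2}{c^2}+\frac{2iab}{c}$. That value is not real, yet the foci are $\pm i\,rc/(ab)$, so their product is positive. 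It is also not invariant under the rescaling $(a,b,x_c,y_c,r)\mapsto t\,(a,b,x_c,y_c,r)$, which leaves $\mathcal{E}_{pre}$ unchanged.

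Your method, carried to the end, settles this in a few lines with no CAS. Put $X=x_c/a$, $Y=y_c/b$, $d=rc/(ab)$. Then the foci are $m\pm id$ with $m=X+iY$, and $r/b=ad/c$. One gets $1-\overline{f_{pre}}\,g_{pre}=1-X^2-Y^2+d^2+2iXd$, so the condition $|1-\overline{f_{pre}}\,g_{pre}|=2r/b$ becomes, with $u=d^2$,
\[ u^2+2\Big(1+X^2-Y^2-\frac{2a^2}{c^2}\Big)u+\big(1-X^2-Y^2\big)^2=0 . \]
Its reduced discriminant factors as $4\big(\frac{a^2}{c^2}-X^2\big)\big(\frac{b^2}{c^2}+Y^2\big)$. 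The roots multiply to $(1-X^2-Y^2)^2$, so the larger root exceeds $1-X^2-Y^2$ and puts a focus outside $\mathbb{D}$; the caustic therefore corresponds to the smaller root. Substituting into $f_{pre}g_{pre}=m^2+u$ gives
\[ f_{pre}\,g_{pre}=\frac{a^2+b^2}{c^2}+\frac{2i\,x_cy_c}{ab}-\frac{2}{abc^2}\sqrt{(a^4-c^2x_c^2)(b^4+c^2y_c^2)} . \]
This reduces to $(a-b)/(a+b)$ at the origin. It also agrees numerically with the incircles of explicit triangles inscribed in $\mathcal{E}$, the incircle being a valid circular caustic by Poncelet's porism.

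So the constant term and the $x_cy_c$ term of the printed formula are right, but the radical term is misprinted. The printed version has $1/c$ where $1/c^2$ is needed, and $b^4-c^2y_c^2$ where $-(b^4+c^2y_c^2)$ is needed; with the latter, $i\sqrt{\cdot}$ becomes the real negative term above. Your proposal should finish this computation and state the corrected identity, rather than search for conventions under which the printed one holds.
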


\section{Inversive triangle}
\label{app:inversive}
Let $d=|OP|$ be the distance between a point $P$ and the center of a circle $\Q$ of radius $u$.

\begin{definition}
The power $\Pi(P,\Q)$ of a point $P$ relative to circle $\Q$ is given by $d^2-u^2$. 
\end{definition}

Referring to \zcref{fig:inversive}, let $\mathcal{C}$ (center $X_3$, radius $R$) and $\mathcal{C}' $ (center $X_3'$) denote the circumcircle of $T$ and $T'$, respectively, with $\K=(O,r)$ the circle of inversion.

\begin{lemma}
$\mathcal{C}'$ is in the pencil defined by $\mathcal{C}$ and $\mathcal{K}$.
\end{lemma}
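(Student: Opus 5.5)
The plan is to work with the general equation of a circle in the plane. Normalize each circle as $\Sigma(X)=|X|^2-2\langle c,X\rangle+k=0$, with leading coefficient $1$. Then $\Sigma(X)$ equals the power of $X$ with respect to that circle, and the pencil generated by $\mathcal{C}$ and $\mathcal{K}$ consists of the curves $\alpha\,\mathcal{C}(X)+\beta\,\mathcal{K}(X)=0$. We must show that the equation of $\mathcal{C}'$ has this form. Place $O$ at the origin, so that $\mathcal{K}(X)=|X|^2-r^2$. Write $\mathcal{C}(X)=|X|^2-2\langle X_3,X\rangle+\Pi$, where $\Pi=|X_3|^2-R^2=\Pi(O,\mathcal{C})$ is the power of the center of inversion with respect to the circumcircle.

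Next I will compute the image of $\mathcal{C}$ under inversion. Since $X'=r^2X/|X|^2$, we have $X=r^2X'/|X'|^2$. Substituting into $\mathcal{C}(X)=0$ and multiplying by $|X'|^2/r^4$, assuming $\Pi\neq 0$ (that is, $O\notin\mathcal{C}$), gives
\[ \Pi\,|X'|^2-2r^2\langle X_3,X'\rangle+r^4=0. \]
This is a circle, and it passes through $A',B',C'$ because $A,B,C\in\mathcal{C}$; hence it is $\mathcal{C}'$. It follows at once that its center is $X_3'=r^2X_3/\Pi$, which gives the collinearity of $X_3,O,X_3'$ and the ratio $|OX_3'|/|OX_3|=r^2/|\Pi|$ used elsewhere in the paper.

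It remains to check membership in the pencil. I look for $\alpha,\beta$ with $\alpha\,\mathcal{C}+\beta\,\mathcal{K}=\Pi|X|^2-2r^2\langle X_3,X\rangle+r^4$. Comparing the linear terms forces $\alpha=r^2$. The quadratic terms then give $\alpha+\beta=\Pi$, so $\beta=\Pi-r^2$. The constant term requires $\alpha\Pi-\beta r^2=r^2\Pi-(\Pi-r^2)r^2=r^4$, and this holds identically. Therefore the equation of $\mathcal{C}'$ is $r^2\,\mathcal{C}+(\Pi-r^2)\,\mathcal{K}$, which proves the lemma.

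There is no real obstacle in this argument. The only care needed is with the degenerate case $\Pi=0$, where $O$ lies on $\mathcal{C}$ and $\mathcal{C}'$ is a line. That line is the radical axis $r^2\,\mathcal{C}-r^2\,\mathcal{K}$, which still belongs to the pencil. This case corresponds to the circumcenter escaping to infinity, consistent with \zcref{prop:exterior}.
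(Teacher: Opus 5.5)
Your proof is correct, but it takes a different route from the paper. The paper argues conformally. It takes the pencil $\mathcal{Q}$ orthogonal to the pencil $\mathcal{P}$ spanned by $\mathcal{C}$ and $\mathcal{K}$, and observes that every $\omega\in\mathcal{Q}$ is orthogonal to $\mathcal{K}$ and hence fixed by the inversion. It then transfers $\omega\perp\mathcal{C}$ to $\omega\perp\mathcal{C}'$ by angle preservation, and concludes that $\mathcal{C}'$, being orthogonal to all of $\mathcal{Q}$, lies in the pencil orthogonal to $\mathcal{Q}$, namely $\mathcal{P}$. You instead substitute the inversion into the normalized equation of $\mathcal{C}$ and exhibit the explicit combination $r^2\,\mathcal{C}+(\Pi-r^2)\,\mathcal{K}$. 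It checks term by term, including the case $\Pi=0$, where it reduces to the radical axis.

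The paper's argument is coordinate-free and explains why the lemma holds. However, it quotes without proof the duality fact that the circles orthogonal to every member of $\mathcal{Q}$ are exactly those of $\mathcal{P}$, and it carries no quantitative information, so the paper has to establish \zcref{cor:collinear} and \zcref{lem:ratio-ox3} separately. Your computation is self-contained and delivers both at once through $X_3'=O+\frac{r^2}{\Pi}(X_3-O)$. It also records, via the sign of $\Pi$, whether $X_3'$ lies on the same ray from $O$ as $X_3$ or on the opposite one.

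The only blemishes are cosmetic. The displayed equation comes from multiplying by $|X'|^2$, not $|X'|^2/r^4$. Also, the claim that the linear terms force $\alpha=r^2$ presumes $X_3\neq O$, although your choice of $\alpha,\beta$ works in the concentric case as well.
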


While it is sufficient to argue that inversion is an anti-conformal map, here is a more detailed proof.

\begin{proof}
Let $\mathcal{P}$ be the pencil determined by $\mathcal{K}$ and $\mathcal{C}$. Consider the pencil $\mathcal{Q}$ orthogonal to the pencil $\mathcal{P}$; that is, every circle in $\mathcal{Q}$ is orthogonal to every circle in $\mathcal{P}$.

Now take any circle $\omega \in \mathcal{Q}$. By definition, $\omega \perp \mathcal{K}$. Since inversion is an anticonformal map (it preserves local angles) and it sends circles to circles, it also preserves orthogonality of circles. The inversion fixes the two intersection points $\omega \cap \mathcal{K}$ and sends $\mathcal{K}$ to itself. So the image of $\omega$ is again a circle through those same two points orthogonal to $\mathcal{K}$. Hence, the image of $\omega$ is $\omega$ itself.

Because $\omega \in \mathcal{Q}$, we also have $\omega \perp \mathcal{C}$. Since inversion preserves orthogonality of circles, the image of $\omega$ is orthogonal to the image of $\mathcal{C}$. But the image of $\omega$ is just $\omega$, and the image of $\mathcal{C}$ is $\mathcal{C}'$. Thus
$\omega \perp \mathcal{C}'$.

We have shown that any circle $\omega$ in $\mathcal{Q}$ is orthogonal to $\mathcal{C}'$. Therefore $\mathcal{C}'$ belongs to the pencil of circles orthogonal to $\mathcal{Q}$. But the pencil orthogonal to $\mathcal{Q}$ is precisely $\mathcal{P}$, the pencil generated by $\mathcal{K}$ and $\mathcal{C}$. Hence $\mathcal{C}' \in \mathcal{P}$, as desired.


\end{proof}

Since the centers of circles in a pencil have collinear centers:

\begin{corollary}
\label{cor:collinear}
$X_3$, $O$, and $X_3'$ are collinear.
\end{corollary}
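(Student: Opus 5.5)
The statement to prove is \zcref{cor:collinear}: $X_3$, $O$ and $X_3'$ are collinear. I will derive it from the preceding lemma, which places $\mathcal{C}'$ in the pencil $\mathcal{P}$ generated by $\mathcal{C}$ and $\mathcal{K}$. The only additional fact needed is that all circles of a pencil have collinear centers.

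To prove that fact, normalize circle equations as $|z|^2-\ol{c}z-c\ol{z}+k=0$, where $c$ is the center and $k$ is real. Write the equations of $\mathcal{C}$ and $\mathcal{K}$ as $S_{\mathcal{C}}=0$ and $S_{\mathcal{K}}=0$. A member of the pencil has the form $\alpha S_{\mathcal{C}}+\beta S_{\mathcal{K}}=0$ with $\alpha+\beta\neq 0$. Dividing by $\alpha+\beta$ to restore the normalization, its center is
\[
\frac{\alpha X_3+\beta O}{\alpha+\beta}.
\]
This is an affine combination of $X_3$ and $O$ with real weights, so it lies on the line $X_3O$. Applying this to $\mathcal{C}'$, whose center is $X_3'$, gives the collinearity.

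Two points need care. First, degenerate cases. If $X_3=O$, the line is not determined, but then every circle of the pencil is concentric and $X_3'=O$, so the statement holds trivially. If $O\in\mathcal{C}$, then $\mathcal{C}'$ is a line, corresponding to $\alpha+\beta=0$, and its center is at infinity. Projectively that point still lies on the line $X_3O$: it is the point at infinity in the direction perpendicular to the radical axis, and that direction is $X_3O$. Second, the pencil lemma assumes implicitly that $\mathcal{C}\neq\mathcal{K}$, so that the pencil is well defined. If $\mathcal{C}=\mathcal{K}$, inversion fixes the circle and $X_3'=X_3$, so collinearity is immediate.

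I do not expect a real obstacle here; the substance was the preceding lemma.

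As a cross-check independent of pencils, I could verify directly that inversion maps a circle with center $X_3$ and radius $R$ to a circle centered at
\[
O+\frac{r^2(X_3-O)}{|X_3-O|^2-R^2}.
\]
This explicitly gives $X_3'$ as a real multiple of $X_3-O$ added to $O$. It also yields the ratio $|OX_3'|/|OX_3|=r^2/\Pi(O,\mathcal{C})$ used later in the paper.
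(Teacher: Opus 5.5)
Your proposal is correct and follows the paper's route: the paper deduces the corollary directly from the pencil lemma together with the fact that circles in a pencil have collinear centers. You verify that fact explicitly through the real affine combination $(\alpha X_3+\beta O)/(\alpha+\beta)$, your degenerate cases are handled soundly, and your cross-check formula is the paper's \zcref{lem:ratio-ox3}, which needs an absolute value on $\Pi(O,\mathcal{C})$ when you state the ratio of lengths.
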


\begin{lemma} Let $\K=(O,r)$ be the circle of inversion. Then,
\label{lem:ratio-ox3}
$\frac{|O X_3|}{|O X_3'|} = |\frac{|\Pi(O,\Z)|}{r^2}=|\frac{|O-X_3|^2-R^2}{r^2}|$.
\end{lemma}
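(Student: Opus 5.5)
The plan is to compute the center $X_3'$ of the image circle $\mathcal{C}'$ explicitly, in a frame centered at $O$. Put $O$ at the origin and write $c=X_3-O$ as a vector, so $\mathcal{C}$ is $|x-c|^2=R^2$, i.e. $|x|^2-2\langle x,c\rangle+(|c|^2-R^2)=0$. The inversion in $\K$ is $x\mapsto y=r^2x/|x|^2$, so $x=r^2y/|y|^2$ and $|x|^2=r^4/|y|^2$.

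Substituting into the equation of $\mathcal{C}$ gives
\[\frac{r^4}{|y|^2}-\frac{2r^2\langle y,c\rangle}{|y|^2}+\Pi=0,\qquad \Pi:=|c|^2-R^2=\Pi(O,\mathcal{C}).\]
Multiplying by $|y|^2/\Pi$ (assuming $\Pi\neq 0$, i.e. $O\notin\mathcal{C}$, since otherwise $\mathcal{C}'$ is a line) yields $|y|^2-2\langle y, r^2c/\Pi\rangle+r^4/\Pi=0$. Therefore $\mathcal{C}'$ is the circle with center $X_3'-O=r^2c/\Pi$.

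Taking norms, $|OX_3'|=r^2|OX_3|/|\Pi|$, which rearranges to $|OX_3|/|OX_3'|=|\Pi(O,\mathcal{C})|/r^2=\bigl||O-X_3|^2-R^2\bigr|/r^2$, as claimed. The same formula reproves Corollary~\ref{cor:collinear}, since $X_3'-O$ is a real multiple of $X_3-O$. It also shows that $O$ separates $X_3$ and $X_3'$ exactly when $\Pi<0$.

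The only delicate point is the degenerate case $\Pi=0$ and the sign of $\Pi$, which is why the statement uses an absolute value. The case $c=0$ (so $X_3=O$) is trivial, since then both centers coincide with $O$ and both sides vanish. There is no real obstacle beyond this bookkeeping; the computation is a direct substitution.
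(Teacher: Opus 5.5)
Your proof is correct and is essentially the paper's: the paper quotes the standard fact that the inverse of a circle with center $X_3$ and radius $R$ has center $O+s(X_3-O)$ with $1/s=(|X_3-O|^2-R^2)/r^2$, and you derive exactly this by substituting $x=r^2y/|y|^2$ into the equation of $\mathcal{C}$. One small quibble: when $X_3=O$ the left-hand ratio is $0/0$ and the right-hand side is $R^2/r^2\neq 0$, so it is not true that both sides vanish; in that case the statement should be read as $|OX_3|=\frac{|\Pi(O,\mathcal{C})|}{r^2}\,|OX_3'|$, which holds trivially with both sides zero.
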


\begin{proof}
The inverse of a circle of radius $R$ with center $C_1=(x_1,y_1)$ with respect to a circle with  center $C_0=(x_0,y_0) $ and  radius $r$  is another circle with center
$(x_1',y_1')=C_0+s(C_1-C_0)=\left(
 x_0+s(x_1-x_0),y_0+s(y_1-y_0)\right)$ and radius $r'=|s|R$
 where
 $1/s=(|C_1-C_0|^2-R^2)/r^2$. 
Applying to our case, it follows that
$X_3'=O+s(X_3-O)$. Therefore, $|O-X_3|/(|O-X_3'|=|1/s|$. This ends the proof.

\end{proof}

\begin{figure}
\centering
\includegraphics[width=0.6\linewidth]{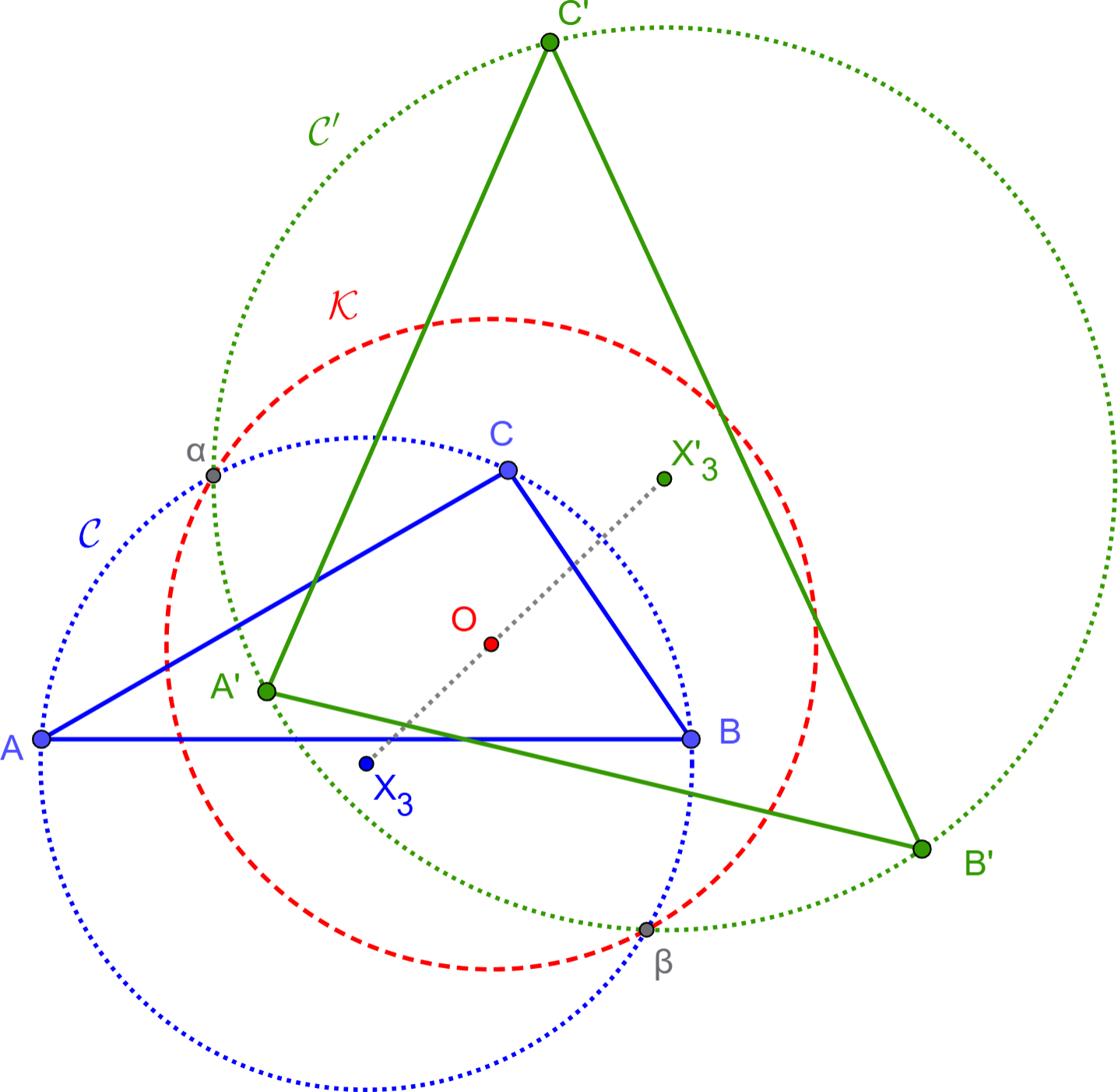}
\caption{Let $T=ABC$ (blue) be a reference a triangle with circumcircle $\mathcal{C}$ (center $X_3$) and $T'=A'B'C'$ (green), circumcircle $\mathcal{C}'$ (center $X_3'$), have vertices at the inversions of $A,B,C$ with respect to a circle $\mathcal{K}$ centered at $O$. It can be shown that (i) $\mathcal{C}'$ is in the pencil defined by $\mathcal{C}$ and $\mathcal{K}$ ($\alpha,\beta$ are common complex intersection points), (ii) $X_3'$ is collinear with $O$ and $X_3$, (iii) the ratio $|O X_3|/|O X_3'|$ is given by \zcref{lem:ratio-ox3}.}
\label{fig:inversive}
\end{figure}

\section{\torp{$\Pi_5$}{Pi5} constants}
\label{app:kappas}
The following are the constants required for \zcref{eqn:P5}.
{\small\begin{align*}
\gamma_1=&f^{2} g^{2} p^{8} {\ol{f}}^{2} {\ol{g}}^{2}-2 q f g \ol{f} \ol{g} \left(\ol{f} \ol{g}+f g \right) p^{7}\\
+&\left(2 {\ol{g}}^{2} {\ol{f}}^{2} f^{2} g^{2}-{\ol{g}}^{2} \ol{f} f^{2} g -{\ol{g}}^{2} \ol{f} f \,g^{2}-\ol{g} {\ol{f}}^{2} f^{2} g -\ol{g} {\ol{f}}^{2} f \,g^{2}+{\ol{f}}^{2} {\ol{g}}^{2}+4 f \ol{f} g \ol{g}+f^{2} g^{2}\right) p^{6} q^{2}\\
+& ( {\ol{g}}^{3} \ol{f} f g -2 {\ol{g}}^{2} {\ol{f}}^{2} f g +f g \ol{g} {\ol{f}}^{3}+\ol{g} \ol{f} f^{3} g -2 \ol{g} \ol{f} f^{2} g^{2}+\ol{g} \ol{f} f \,g^{3}+{\ol{g}}^{2} \ol{f} f +{\ol{g}}^{2} \ol{f} g \\
 + &\ol{g} {\ol{f}}^{2} f +\ol{g} {\ol{f}}^{2} g +\ol{g} f^{2} g +\ol{g} f \,g^{2}+\ol{f} f^{2} g +\ol{f} f \,g^{2}-2 \ol{f} \ol{g}-2 f g ) p^{5} q^{3}\\
+&\left(3 {\ol{g}}^{2} {\ol{f}}^{2} f^{2} g^{2}-3 {\ol{g}}^{2} \ol{f} f^{2} g -3 {\ol{g}}^{2} \ol{f} f \,g^{2}-3 \ol{g} {\ol{f}}^{2} f^{2} g -3 \ol{g} {\ol{f}}^{2} f \,g^{2}\right.\\
-&\left. {\ol{g}}^{3} \ol{f}-\ol{g} {\ol{f}}^{3}+6 f \ol{f} g \ol{g}-f^{3} g -f \,g^{3}-f \ol{g}-g \ol{g}-f \ol{f}-\ol{f} g +1\right) q^{4} p^{4}\\
+&\left({\ol{g}}^{3} \ol{f} f g -2 {\ol{g}}^{2} {\ol{f}}^{2} f g +f g \ol{g} {\ol{f}}^{3}+\ol{g} \ol{f} f^{3} g -2 \ol{g} \ol{f} f^{2} g^{2}+\ol{g} \ol{f} f \,g^{3}+{\ol{g}}^{2} \ol{f} f +{\ol{g}}^{2} \ol{f} g +\ol{g} {\ol{f}}^{2} f\right.\\
+&\left.\ol{g} {\ol{f}}^{2} g +\ol{g} f^{2} g +\ol{g} f \,g^{2}+\ol{f} f^{2} g +\ol{f} f \,g^{2}-2 \ol{f} \ol{g}-2 f g \right) p^{3} q^{5}\\
+&\left(2 {\ol{g}}^{2} {\ol{f}}^{2} f^{2} g^{2}-{\ol{g}}^{2} \ol{f} f^{2} g -{\ol{g}}^{2} \ol{f} f \,g^{2}-\ol{g} {\ol{f}}^{2} f^{2} g -\ol{g} {\ol{f}}^{2} f \,g^{2}+{\ol{f}}^{2} {\ol{g}}^{2}+4 f \ol{f} g \ol{g}+f^{2} g^{2}\right) p^{2} q^{6}\\
-&2 f g \,q^{7}  
\ol{f} \ol{g} \left(\ol{f} \ol{g}+f g \right) p +f^{2} g^{2} q^{8} {\ol{f}}^{2} {\ol{g}}^{2}\\
\gamma_2=&4 \left(f \ol{f} g \ol{g} p^{4}+\left(-\ol{f} \ol{g}-f g \right) q \,p^{3}+q^{2} \left(f \ol{f} g \ol{g}+1\right) p^{2}+\left(-\ol{f} \ol{g}-f g \right) p \,q^{3}+f \ol{f} g \ol{g} q^{4}\right)^{2}
\end{align*}}


\bibliographystyle{maa}
\bibliography{refs,refs_00_book,refs_01_pub,refs_02_acc,refs_04_unsub}

\end{document}